\newtheorem{thm}{Theorem}[section]
\newtheorem{cor}[thm]{Corollary}
\newtheorem{lem}[thm]{Lemma}
\newtheorem{prop}[thm]{Proposition}
\theoremstyle{definition}
\newtheorem{defn}[thm]{Definition}
\theoremstyle{remark}
\newtheorem{rem}[thm]{Remark}
\numberwithin{equation}{section}
\newcommand{\R}{\mathbb R}
\newcommand{\eps}{\epsilon}
\newcommand{\p}{\partial}
\newcommand{\comment}[1]{}
\begin{document}
\title{Quasi-Harnack inequality}

\author{D. De Silva}
\address{Department of Mathematics, Barnard College, Columbia University, New York, NY 10027, USA}
\email{\tt  desilva@math.columbia.edu}
\author{O. Savin}
\address{Department of Mathematics, Columbia University, New York, NY 10027, USA}\email{\tt  savin@math.columbia.edu}
\thanks{O.~S.~is supported by  NSF grant DMS-1200701.}

\begin{abstract}
In this paper we obtain some extensions of the classical Krylov-Safonov Harnack inequality. The novelty is that we consider functions that do not necessarily satisfy an infinitesimal equation but rather exhibit a two-scale behavior. We require that at scale larger than some $r_0>0$ (small) the functions satisfy the comparison principle with a standard family of quadratic polynomials, while at scale $r_0$ they satisfy a Weak Harnack type estimate. 

We also give several applications of the main result in very different settings such as discrete difference equations, nonlocal equations, homogenization and the quasi-minimal surfaces of Almgren.  

\end{abstract}

\maketitle
\section{Introduction}

 The Harnack inequality of Krylov and Safonov plays a fundamental role in the regularity theory of nonlinear second order elliptic equations. It states that a nonnegative solution $u \ge 0$ to a second order equation satisfies
$$a^{ij}(x)u_{ij}=0 \quad \mbox{in $B_1$}  \quad \Longrightarrow \quad \quad \sup_{B_{1/2} }u \le C u(0),$$
 where $C$ is a {\it universal} constant depending only on $n$ and the ellipticity constants $\lambda_{min}$, $\lambda_{max}$ of the coefficients $a^{ij}(x)$. It is a quantified version of the maximum principle and it provides the $C^\alpha$ estimates and the compactness for the class of solutions to uniformly elliptic equations with measurable coefficients.

Harnack inequality has a different version for supersolutions (see Lemma 4.5 in \cite{CC}). It is a pointwise-to-measure estimate known as Weak Harnack inequality and it can be stated in the following way.

{\bf Weak Harnack inequality.} {\it Assume that $u$ is a positive supersolution
$$ u \ge 0, \quad a^{ij}(x)u_{ij}\le0 \quad \quad \mbox{in $B_1$, and}\quad \quad u(0)=1.$$
Given $\delta>0$ small, there exists $C(\delta)$ large depending on $\delta$ and the universal constants such that
$$|\{u \leq C(\delta) \}| \ge (1-\delta) |B_1|.$$}

An interesting observation is that the class of supersolutions in the Weak Harnack inequality can be enlarged to include functions that might not necessarily satisfy any equation. The proof in \cite{CC} carries through if we require $u$ to satisfy the comparison principle by below only with one quadratic polynomial
$$P_\Lambda(x):= \frac{\Lambda}{2} x_n^2 - \frac 12 |x'|^2,$$
together with rotations and dilations of it, with $\Lambda$ some large fixed constant. 

In this paper we investigate the situation when the comparison with the family of functions $P_\Lambda$ is satisfied only up to a scale $r_0$ (small), and at scale $C(\Lambda)r_0$, a version of weak Harnack inequality holds. In our main result we show that the Harnack estimates still remain valid with constants independent of $r_0$. 

The second part of the paper is devoted to show how our results can be applied in several different situations and we discuss four representative examples. 

First, as a direct application, we recover the Harnack inequality of Hung-Ju and Trudinger \cite{HT} for discrete difference uniformly elliptic equations.   

A second motivating example comes from the theory of nonlocal equations. Solutions to integro-differential equations of order $\sigma \in (0,2)$, when $\sigma$ is close to 2, belong to a family with a two-scale behavior. At small scales the integral behavior dominates whereas at large scales the equation becomes close to being a local equation. We deduce the nonlocal version of the Harnack inequality of Caffarelli and Silvestre \cite{CS} (with uniform constants as $\sigma \to 2^-$) from our results. There are many other problems involving nonlocal operators which fit the general framework given in this paper. We mention for example the uniform H\"older estimates of Caffarelli and Valdinoci for nonlocal $s$-minimal surfaces as $s \to \frac 12^-$, or the boundary layer estimates for phase transitions from \cite{S}.

A third application of our results comes from the homogenization of degenerate PDEs. 
A simple situation we consider is the case of linear equations with measurable coefficients with ellipticity constants $\lambda_{min}(\frac x \eps)$, $\lambda_{max}( \frac x \eps)$ for some functions $\lambda_{min}$, $\lambda_{max}$ periodic of period 1. Notice that if $\lambda_{min}$, $\lambda_{max}$ degenerate in the interior of the unit cube but they are bounded away from $0$ and $\infty$ near the sides of the cube, then Harnack inequality holds at scale $\eps$. It turns out that we end up in a setting as above and the uniform Holder estimates remain valid at all scales (see Theorem \ref{THo}).

Finally, we discuss the case of quasi-minimizers for the perimeter functional for sets which are sufficiently close to half-spaces, and we show that quasi-minimizers exhibit a two-scale type behavior. Then we sketch a nonvariational proof of the $C^{1,\alpha}$ estimates of Almgren and Tamanini based on Harnack inequality.

    The paper is organized as follows. In Section 2 we introduce the classes of functions $\mathcal P^I_\Lambda(r)$ and $ \mathcal W^a_M(\rho)$ and we state the main results of the paper Theorem \ref{main} and Proposition \ref{HI}. Section 3 is devoted to the proof of the main results. In Section 4 we discuss some of the applications mentioned above, and finally in Section 5 we consider the case of quasi-minimizers. 
 
\section{Statement of the main results}
Let $\Lambda>0$ be fixed (large) and let
$$P_\Lambda(x):= \frac \Lambda 2 (x \cdot \xi)^2 - \frac 12  |x|^2 + b \cdot x + d, \quad \quad \xi, b \in \R^n, \quad d \in \R,$$ 
be a quadratic polynomial with $\xi$ a unit direction 
$$|\xi|=1, \quad \mbox{and} \quad |b|, |d| \le 1.$$
Let $r >0,$ and $I \subset [0, \infty)$ be a closed interval. 

We introduce the following definition for $\mathcal P _\Lambda^I(r)$ which can be thought as {\it the class of supersolutions} of size $I$ at scale $r$.

\begin{defn}\label{super_comp} Let $u: \Omega \to \R$ be a continuous function on a domain $\Omega \subset \R^n$. We say that
$$u \in {\mathcal P}_{\Lambda}^{I}(r) \quad \quad \mbox{in $\Omega$},$$ if and only if $u$ cannot be touched from below at any point $x_0$ in a neighborhood $B_{r}(x_0) \subset \Omega$ by a polynomial $a \, P_\Lambda(x-x_0)$ as above, for some $a \in I$.
\end{defn}

We recall that if $u$ and $\varphi$ are continuous functions in a domain $\Omega$, we say that $\varphi$ touches $u$ from below (resp. above) at $x_0 \in \Omega$ in a neighborhood $B_r(x_0)$ whenever
$$\varphi \leq u \quad (\text{resp. $\varphi \geq u$}) \quad \text{in $B_r(x_0)$}, \quad \varphi(x_0)=u(x_0).$$

Notice that 
$$ {\mathcal P}_{\Lambda_1}^{I}(r) \subset {\mathcal P}_{\Lambda_2}^{I}(r) \quad \mbox{if} \quad \Lambda_1 \le \Lambda_2,$$
and in our analysis we assume without loss of generality that $\Lambda$ is large.

Now, for parameters $M, a, \rho>0$ we introduce another class of functions ${\mathcal{W}}_{M}^a(\rho)$ which can be thought as the class of functions of size $a$ which satisfy Weak Harnack inequality at scale $\rho$.

\begin{defn}\label{whi} Let $u: \Omega \to \R$ be a continuous function on a domain $\Omega \subset \R^n$. We say that 
$$u \in {\mathcal{W}}_{M}^a(\rho) \quad \quad \mbox{in $\Omega$}, $$ if $u \ge 0$ and whenever $B_{2\rho}(x_0) \subset \Omega$,
and 
$$u(x_0) \leq a, $$
then 
$$\frac{|\{u \leq M\, a\} \cap B_{\rho}(x_0)|}{|B_{\rho}(x_0)|} \geq 1-\delta,$$ with $\delta(n)>0$ a given constant depending only on $n$. 
\end{defn}

The value of $\delta(n)$ in the definition above will be specified later, in the proof of Theorem \ref{HI}.

Let $\Lambda, M>0$  be given. Our main theorem reads as follows. 

\begin{thm}\label{main} Let $u$ be a continuous function in $B_1$, with $0 \leq u \leq 1.$ There exist constants $c_0, \bar C,$ depending only on $n$ and $\Lambda$ and positive constants $a(M)$,$\eta(M)$ depending also on $M$ such that if 
$$1-u,u \; \in \; \mathcal P_{\Lambda}^{[\eta, 1]}(r) \cap \mathcal W_{M}^{a}(\bar C r)$$
for some $r \leq c_0$, then  
$$osc_{B_{1/2}} u \leq 1-\eta.$$
\end{thm}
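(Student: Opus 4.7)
The plan is to attack Theorem \ref{main} in two stages: first upgrade the two-scale hypotheses to a unit-scale weak Harnack estimate (presumably the content of Proposition \ref{HI}), and then deduce oscillation decay by the classical Krylov--Safonov dichotomy. For the second stage, note that at least one of the sets $\{u\ge 1/2\}\cap B_{1/2}$ or $\{1-u\ge 1/2\}\cap B_{1/2}$ has measure at least $|B_{1/2}|/2$. Since both $u$ and $1-u$ satisfy the assumed two-scale properties, applying the unit-scale weak Harnack in its $L^\varepsilon$ form to the relevant one of them bounds it below by a universal constant $\eta$ throughout $B_{1/2}$, yielding $\mathrm{osc}_{B_{1/2}} u \le 1-\eta$.

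The heart of the proof is the upgrade step. Following the structure of the Caffarelli--Cabr\'e proof of Krylov--Safonov in \cite{CC}, I would set up an ABP-type sliding-paraboloid argument adapted to the polynomial family $\{a P_\Lambda(\cdot-y)\}_{a\in[\eta,1]}$: slide these paraboloids from below until each first touches $u$ in $\overline{B_1}$, and observe that the set of contact positions $y$ has measure bounded below in terms of $\inf u$ by a Jacobian estimate on the normal map. The new feature of the two-scale setting is that $u\in\mathcal{P}_\Lambda^{[\eta,1]}(r)$ only forbids touching inside a ball of radius $r$, so near each formal contact point $x^*$ there must exist an actual point $\tilde x\in B_r(x^*)$ with $u(\tilde x)\le a P_\Lambda(\tilde x-y)+t\le C(\Lambda)\,a$. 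At this $\tilde x$ the small-scale hypothesis $u\in\mathcal{W}_M^a(\bar C r)$ activates and returns a ball $B_{\bar C r}(\tilde x)$ on a $(1-\delta)$-fraction of which $u\le M C(\Lambda)\,a$.

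Combining these ingredients, one covers the contact set by a bounded-overlap family of such balls $B_{\bar C r}(\tilde x)$ (Vitali) to obtain a set of measure comparable to that of the contact set on which $u\le C'(\Lambda,M)\,a$, and then iterates dyadically on the level sets $\{u \le 2^{-k}\}$ in the Calder\'on--Zygmund manner to obtain the unit-scale $L^\varepsilon$ estimate. The main obstacle I expect is the bookkeeping of measure leakage between the two scales: the constant $\delta(n)$ in Definition \ref{whi} must be chosen small enough (in fact depending also on $\Lambda$) that each round of covering does not consume more measure than the ABP step replaces, and $\bar C,c_0$ must then be tuned so that $\bar C r\ll 1$ admits sufficiently many iterations inside $B_1$. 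The natural ordering --- $\Lambda$ fixed, then $\bar C,\delta,c_0$ as functions of $n$ and $\Lambda$, and finally the $M$-dependent parameters $a,\eta$ --- should produce exactly the dependencies stated in the theorem.
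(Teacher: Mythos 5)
Your stage-two argument is exactly the paper's proof of Theorem \ref{main}: given the pointwise-to-measure estimate of Theorem \ref{HI}, one rescales (the paper takes $a=\frac{1}{4M}$, $\eta=\frac{1}{4C_0M}$, so that $u/\eta$, resp. $(1-u)/\eta$, lands precisely in $\mathcal P_\Lambda^{[1,C_0]}(r)\cap\mathcal W_M^{C_0}(\bar C r)$) and runs the standard dichotomy: if $\inf_{B_{1/2}}u<\eta$ and $\sup_{B_{1/2}}u>1-\eta$, both sublevel sets $\{u\le 2C_0M\eta\}$ and $\{1-u\le 2C_0M\eta\}$ would occupy $\tfrac34$ of $B_{1/2}$, which is impossible once $2C_0M\eta<\tfrac12$; this is correct and is essentially the paper's (one-line) proof. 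The long "heart" of your proposal is a re-derivation of Theorem \ref{HI} itself, which is not needed for this statement since the paper states and proves it independently; as a side remark, your sketch of that part leans on a ``Jacobian estimate on the normal map,'' which is not available here because $u$ satisfies no infinitesimal equation and need not be twice differentiable at contact points --- the paper replaces it by the discrete separation estimate between contact points and vertices (Lemma \ref{touching}) together with the dyadic-cube counting and Calder\'on--Zygmund iteration of Section 3, and it invokes the class $\mathcal W_M^a(\bar C r)$ only at genuine contact points of the concave paraboloids $P^a_y$, not at ``formal'' contact points of the $aP_\Lambda$ family.
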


The proof of Theorem \ref{main} is based on the following version of Weak-Harnack Inequality.

\begin{thm}\label{HI}  Let $u$ be a continuous function in $B_1$, with $u \geq 0$ and assume that $$u(\bar x) \leq 1 \quad \text{for some $\bar x \in B_{1/2}$.}$$ There exist constants $ c_0, C_0,\bar C >0$ depending only on $n$ and $\Lambda$ such that 
if 
$$u \;  \in \; {\mathcal P}_{\Lambda}^{[1,C_0]}(r) \cap {\mathcal W}_{M}^{C_0}(\bar C r)$$
for some $r \leq c_0$, 
then  
$$|\{u \leq 2 C_0 M \} \cap B_{1/2}| \geq \frac 3 4 |B_{1/2}|.$$

\end{thm}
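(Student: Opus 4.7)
The plan is to adapt the classical proof of the weak Harnack inequality (the combination of Lemma 4.5 in \cite{CC} with the Krylov--Tso / Calder\'on--Zygmund iteration) to the two-scale setting. At macro scales $\ge r$ the polynomial barriers from $\mathcal P_{\Lambda}^{[1,C_0]}(r)$ substitute for the supersolution property, while at the micro scale $\bar Cr$ the hypothesis $\mathcal W_M^{C_0}(\bar Cr)$ is used as an atomic point-to-measure step in place of an iterated $L^\varepsilon$ estimate.

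\textbf{Step 1 (seeding at macro scale).} The first task is to upgrade the single value $u(\bar x)\le 1$ into a set of universally positive measure on which $u\le C_0$. I would run an ABP-type argument using sliding lower barriers $a P_\Lambda(\,\cdot\,-x_0)+\mathrm{const}$, parametrized by $\xi\in S^{n-1}$, $x_0$ in a suitable shell, and $a\in[1,C_0]$. For each parameter, translate the barrier upward until it first touches $u$ from below. The hypothesis $\mathcal P_\Lambda^{[1,C_0]}(r)$ forbids contact on a ball of radius $\ge r$, so every admissible contact ball has radius $<r$; the union of all such balls can be made measure-theoretically negligible by choosing $C_0$ large and $r\le c_0$ small. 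Parameters for which the first contact would require $a<1$ produce a pointwise bound $u(y)\le C_0$ at the corresponding contact point $y$, and an area-formula Jacobian computation for the map ``parameter $\mapsto$ contact point'' yields
$$|\{u\le C_0\}\cap B_{3/4}|\ge \mu_0 > 0.$$

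\textbf{Step 2 (propagation and iteration).} For each seed point $y$, the hypothesis $u\in\mathcal W_M^{C_0}(\bar Cr)$ directly gives $|\{u\le MC_0\}\cap B_{\bar Cr}(y)|\ge(1-\delta)|B_{\bar Cr}(y)|$. To upgrade ``positive density at level $C_0$'' into ``density $\ge 3/4$ at level $2MC_0$'' I would argue by contradiction: assume $|\{u>2MC_0\}\cap B_{1/2}|>\tfrac14|B_{1/2}|$ and run a Calder\'on--Zygmund stopping-time decomposition on this set. Any selected dyadic cube $Q$ has a predecessor $\tilde Q$ containing a point $y$ with $u(y)\le 2MC_0$; rescaling Steps~1--2 to $\tilde Q$ (valid whenever $\mathrm{side}(\tilde Q)\gtrsim \bar Cr$) forces $|\{u\le MC_0\}\cap\tilde Q|\ge(1-\delta)|\tilde Q|$, contradicting the stopping rule once $\delta=\delta(n)$ is chosen smaller than the CZ constant. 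Cubes of side $\lesssim\bar Cr$ are handled directly by Step~2. The factor $2$ in $2MC_0$ absorbs the $\delta$-error created by the Vitali/stopping-time overlap.

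\textbf{Main obstacle.} The hardest part is the ABP-type argument in Step~1. Because the Hessian of $P_\Lambda$ is indefinite (one large positive eigenvalue in direction $\xi$ and $n-1$ negative ones), the standard convex-envelope ABP does not apply, and one must instead work with a one-parameter family of ``cylindrical'' envelopes indexed by $\xi\in S^{n-1}$ and carefully track the Jacobian of the contact map through this family. A secondary subtlety is choosing the parameter $C_0$ so as to simultaneously make sub-$r$-scale contact balls negligible, leave $C_0$ admissible as the upper endpoint of the barrier interval in $\mathcal P_{\Lambda}^{[1,C_0]}(r)$, and serve as the size threshold in $\mathcal W_M^{C_0}(\bar Cr)$; the bookkeeping between these three constants is where the proof has to close.
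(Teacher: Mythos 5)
There is a genuine gap, and it is in the heart of your Step 1. The ABP/area-formula claim $|\{u\le C_0\}\cap B_{3/4}|\ge \mu_0>0$ cannot be extracted from the hypotheses: the class $\mathcal P_{\Lambda}^{[1,C_0]}(r)$ carries no information whatsoever at scales below $r$, so the contact set of the sliding barriers -- and even the sublevel set $\{u\le C_0\}$ itself -- may have negligible or zero Lebesgue measure (think of the discrete application in Section 4.1, or of Remark \ref{r3}, where $u$ is only controlled on a lattice of spacing $\sim r$ and contact points are isolated). No Jacobian computation over a family of cylindrical envelopes can recover a measure bound on a set that genuinely can be null; the only quantity one can bound from below is the measure of the \emph{union of cubes of size $\sim r$ that meet the contact set}. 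This is exactly what the paper does, and by a mechanism quite different from ABP: the sliding is performed with concave paraboloids $P^a_y$ parametrized by their vertex $y$ (so the indefinite Hessian of $P_\Lambda$, which you flag as the main obstacle, never enters the envelope construction -- $P_\Lambda$ appears only as the forbidden touching configuration), and Lemma \ref{touching} shows that vertices with $|y_1-y_2|\ge C_0 r$ produce contact points with $|z_1-z_2|\ge c_0|y_1-y_2|$, since otherwise one manufactures a polynomial with an extra $\Lambda$-term touching $u$ in a ball of radius $\ge r$, contradicting $\mathcal P^{2a}_\Lambda(r)$. Counting well-separated vertices then counts dyadic cubes of size $2^{-l}\sim \bar C r$ meeting the contact set (Corollary \ref{percent}); the final measure bound for $\{u\le 2C_0M\}$ comes \emph{only} from applying the $\mathcal W$ hypothesis inside each such cube, not from the contact set having measure.

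Your Step 2 also misuses the hypotheses: the stopping-time step applies the point-to-measure property at points where $u\le 2MC_0$, but the assumption is $\mathcal W_M^{C_0}(\bar C r)$, which triggers only at level $C_0$ (in fact, in the form actually used, only at contact points of opening-$C_0$ paraboloids, Definition \ref{w2}); each application raises the level by a factor $M$, so the weak Harnack class cannot be iterated. The paper's architecture resolves this by doing \emph{all} the Calder\'on--Zygmund iteration inside the class $\mathcal P$, increasing the paraboloid opening from $1$ to $C_1^k\le C_0$: Lemma \ref{general} (via the barrier $|x|^{-\gamma}$, which relocates a contact point into the central cube -- a step missing from your outline) feeds the dyadic iteration of Lemma \ref{iteration} and the covering argument of Proposition \ref{PP}, giving that the $\bar Cr$-cubes meeting $A_{C_2}(u)$ cover $\tfrac78$ of $B_{1/2}$; the class $\mathcal W_M^{C_2}$ is then invoked exactly once, cube by cube, to produce $|\{u\le 2MC_2\}\cap B_{1/2}|\ge \tfrac34|B_{1/2}|$. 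To repair your proof you would need to replace the ABP measure estimate by a separation/counting statement of the type of Lemma \ref{touching} and restructure the iteration so that it consumes only the interval $[1,C_0]$ of openings in $\mathcal P$, reserving $\mathcal W$ for a single final application.
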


Once Theorem \ref{HI} is established, the proof of Theorem \ref{main} follows from standard arguments 
by choosing $a=\frac{1}{4M}$ and $\eta= \frac{1}{4C_0M}$.

Notice that the estimates of Theorems \ref{main} and \ref{HI} do not depend on $r$. As $r \to 0$ we recover the standard Harnack inequality for viscosity solutions since any continuous function $u$ satisfies for a fixed $a>0$, $u \in \mathcal W^a_2(r)$ (with $\delta=0$) for all $r$ sufficiently small (depending on the modulus of continuity of $u$).

We also obtain a version of Theorem \ref{HI} which applies to multivalued graphs or, more generally, to closed sets $\Gamma \subset \R^{n+1}$ (see Proposition \ref{PP} and Remark \ref{r3}).

In the second part of the paper we provide a variety of applications of our results. 
We may use slightly weaker definitions for the classes of function $\mathcal P$ and $\mathcal W$ which we make precise below. This will be useful whenever we deal with nonlocal problems for which the global behavior of $u$ in $B_1$ is also relevant.

To this aim, first let us denote by
$$P^a_y(x):= P^a(x-y) +c_y, \quad P^a(x):= -\frac{a}{2}|x|^2, \quad \quad a>0, \quad c_y>0,$$
the class of paraboloids of opening $-a$ and vertex $y$ which have the additional property that
$$P^a_y(x) \le 0 \quad \mbox{outside $B_{3/4}$}.$$
It is straightforward to check that this property implies that 
\begin{equation} \label{01}
y \in B_{3/4}, \quad P^a_y \le a, \quad |\nabla P^a_y| \le a \quad \mbox{in the set where $P_y^a\ge 0$.} 
\end{equation}

\begin{defn}\label{p2} We say that a continuous function in $B_1$, $u \ge 0$,  belongs to the class 
$\mathcal {P}^{I}_\Lambda(r)$ whenever $u$ cannot be touched from below at any point $x_0$ by
$$P_y^a(x) + a \frac \Lambda 2 ((x-x_0)\cdot \xi)^2 \chi_{B_r(x_0)} \quad \mbox{in} \quad B_1,$$
with $a \in I$. Here $P_y^a$ is a quadratic polynomial as above and $\xi$ is a unit direction.
\end{defn}

The set of all contact points (from below) between a given continuous function $u \ge 0$ and a paraboloid $P_y^a$ of the class above will be denoted by $A_a(u).$ Precisely,
\begin{equation}\label{A}
A_a(u):=\{z \in B_1: \exists \ P^a_y \ \text{touching $u$ from below at $z$ in $B_1$}\},
\end{equation}
and we have that $A_a(u)$ is a compact set included in $\overline B_{3/4}$, $u \le a$ on $A_a(u)$ and
$$A_{a_1}(u) \subset A_{a_2}(u) \quad \mbox{if $a_1 \le a_2$.}  $$

\begin{figure}[h]
\includegraphics[width=0.7 \textwidth]{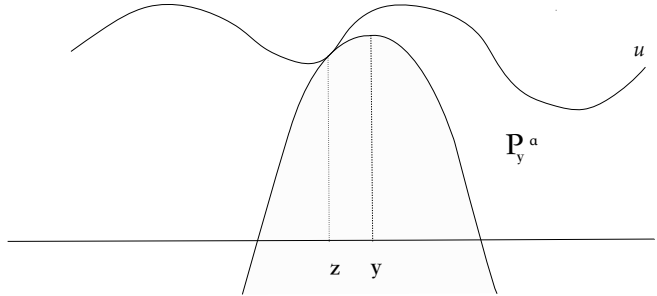}
\caption{$z \in A_a(u)$}
    \label{fig1}
\end{figure}

The difference between Definition \ref{p2} and the original Definition \ref{super_comp} is that above we require in addition that the comparison with $a \, P_\Lambda (x-x_0)$ should hold only at points $x_0$ in $A_a(u)$.  
It is clear from \eqref{01} that if a nonnegative function satisfies Definition \ref{super_comp} in $B_1$ then it satisfies also Definition \ref{p2}.

We modify similarly Definition \ref{whi} to hold only at contact points in $A_a(u)$. 

\begin{defn}\label{w2}
We say that $u \in {\mathcal W}_{M}^a(\rho)$ in $B_1$ if $u \ge 0$ and whenever
 $ x_0 \in A_a(u)$ with touching polynomial $P_y^a$, then
$$\frac{|\{u \leq P_y^a + Ma\}\cap B_{\rho}(x_0)|}{|B_{\rho}(x_0)|} \geq 1- \delta(n).$$
with $\delta(n)$ a small given constant depending only on $n$.
\end{defn}
Clearly, $$ {\mathcal W}_{M}^{a_1}(\rho) \subset {\mathcal W}_{M}^{a_2}(\rho) \quad \mbox{if $a_1 \le a_2$.} $$

\section{Proof of Theorems \ref{main} and \ref{HI}.}

We present here the proof of our main results.
We follow the lines of the standard proof from \cite{CC} and we use an Alexandrov-Bakelman-Pucci (ABP) type estimate combined with the Calderon-Zygmund cube decomposition. The difference in our case is that the estimates hold only at scale larger than $r$ and we need to use some discrete versions of the arguments. We provide the details below.

Throughout this section, {\it universal constants} mean positive constants that depend only on $n$ and $\Lambda$.

Our starting point is the study of the contact set $A_a(u)$ defined in \eqref{A} for a function $u$ in the class ${\mathcal P}_{\Lambda}^a(r)$.

\begin{lem}\label{touching} Let $z_1,z_2 \in A_a(u) \cap B_{5/8}$ and let $y_1,y_2$ be the corresponding vertices of the touching paraboloids.
If $u \in  {\mathcal P}_{\Lambda}^{2a}(r)$ for some $r>0$, then
$$|z_1-z_2| \geq c_0 |y_1-y_2|$$ with  $c_0 >0$ universal, as long as \begin{equation}\label{ver}|y_1 - y_2| \geq C_0 r\end{equation} for a $C_0 >0$ universal.
\end{lem}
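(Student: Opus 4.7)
My plan is a proof by contradiction: assume $|z_1-z_2|<c_0 T$ with $T:=|y_1-y_2|\ge C_0 r$, where $c_0$ is small and $C_0$ large in terms of $n$ and $\Lambda$ alone, and exhibit a test object of the form $P_y^{2a}(x)+a\Lambda((x-x_*)\cdot\xi)^2\chi_{B_r(x_*)}(x)$ of Definition~\ref{p2} that touches $u$ from below in $B_1$ at some $x_*\in A_{2a}(u)$, contradicting $u\in\mathcal{P}_\Lambda^{2a}(r)$.

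After translating so $z_1=0$ and rotating so $y_1-y_2=Te_n$, the structural input is the two-paraboloid lower bound $u(x)\ge\max(P_{y_1}^a,P_{y_2}^a)(x)$, which can be written as
$$-\tfrac{a}{2}|x-\bar y|^2+\tfrac{aT}{2}\bigl|x_n-\hat x_n\bigr|+\kappa,$$
where $\bar y=(y_1+y_2)/2$ and $\hat x_n$ is the $e_n$-coordinate of the bisector $\{P_{y_1}^a=P_{y_2}^a\}$. Comparing $P_{y_i}^a$ and $P_{y_j}^a$ at the two contact points gives the monotonicity $(z_2-z_1)\cdot e_n\le 0$ together with $z_{2,n}\le\hat x_n\le z_{1,n}=0$, so $|\hat x_n|\le|z_1-z_2|<c_0 T$.

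The crucial observation is that $u$ enjoys a quantitative convex-curvature bound of order $aT/r$ in the $e_n$-direction \emph{only near the bisector}: the elementary inequality $|s|\ge s^2/r$ for $|s|\le r$ upgrades the V-shape inside the slab $\{|x_n-\hat x_n|\le r\}$ to the lower bound
$$u(x)\ge -\tfrac{a}{2}|x-\bar y|^2+\tfrac{aT}{2r}(x_n-\hat x_n)^2+\kappa.$$
For $\xi=e_n$ and $x_*$ within distance $\ll r$ of the bisector, the bump $a\Lambda((x-x_*)\cdot e_n)^2$ is then absorbed by this convexity provided $C_0\gtrsim 2\Lambda$, while in perpendicular directions $P_y^{2a}$ is more concave than the lower bound and no compatibility obstruction arises. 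I would therefore aim to find $x_*$ satisfying both (i) $|x_{*,n}-\hat x_n|\ll r$ and (ii) $x_*\in A_{2a}(u)$ by sliding upward the constant of opening-$(-2a)$ paraboloids $P_y^{2a}$ with vertex $y$ on the line through $\bar y$ parallel to $e_n$: for the extreme choices of $y$ the contact occurs at $z_1$ or $z_2$ (using $z_i\in A_a(u)\subset A_{2a}(u)$), and a continuity/intermediate-value argument combined with $|\hat x_n|<c_0 T$ gives an intermediate $x_*$ within $\ll r$ of the bisector.

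The main obstacle I anticipate is precisely this last step, exhibiting a point of $A_{2a}(u)$ sufficiently close to the bisector: $u$ need only equal $\max(P_{y_1}^a,P_{y_2}^a)$ at isolated points, so the sliding argument must be set up robustly and may require choosing $y$ in a two-parameter family (varying both the $e_n$-coordinate and a small perpendicular offset) to track how the touching point moves. Once this is handled, verifying admissibility of $P_y^{2a}$ (namely $y\in B_{3/4}$ and nonpositivity outside $B_{3/4}$) and of the bump parameters is routine, and the resulting touching of the test object contradicts $u\in\mathcal{P}_\Lambda^{2a}(r)$. The quantitative bookkeeping pins down $c_0\sim 1/\Lambda$ and $C_0\sim\Lambda$.
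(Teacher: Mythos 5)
Your setup (argue by contradiction, normalize $a=1$, place the bisector $\{P^a_{y_1}=P^a_{y_2}\}$, and try to exhibit a touching configuration of the form $P^{2a}_y+a\Lambda((x-x_*)\cdot e_n)^2\chi_{B_r(x_*)}$) is in the spirit of the paper, but the two steps your argument leans on are precisely where the work lies, and neither is justified. First, the sliding/intermediate-value step: the contact point of a paraboloid slid up from below does not depend continuously on the vertex --- it can jump across the bisector --- so ``for extreme vertices the contact is at $z_1$ or $z_2$, hence some intermediate vertex gives a contact point within $\ll r$ of the bisector'' is not a valid deduction. You flag this yourself as the main obstacle; it is not a technical refinement left to the reader but the missing core of the proof. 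Second, even granting a point $x_*\in A_{2a}(u)$ with $|x_{*,n}-\hat x_n|\ll r$, the absorption step fails as stated: the kink inequality only gives $u\ge \max_i P^a_{y_i}$, whereas you must show $u\ge P^{2a}_{y_*}+a\Lambda((x-x_*)\cdot e_n)^2$ in $B_r(x_*)$ for the particular touching paraboloid $P^{2a}_{y_*}$ at $x_*$. Since $u(x_*)=P^{2a}_{y_*}(x_*)$ may lie strictly above the V-shaped barrier at $x_*$, the barrier's extra convexity gives no control of $u-P^{2a}_{y_*}$ near $x_*$; and if you instead slide the bumped function down until it fits below $u$, the new contact point is no longer the center of the bump, so the resulting configuration is not of the form required by Definition~\ref{p2}.

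The paper avoids both difficulties by constructing the test function first and letting it select its own contact point: with $d=|y_1-y_2|$ and the bisector normalized to $\{x_n=0\}$, it takes $\tilde P=P^1_{y_1}+\frac d2 x_n+\Lambda x_n^2-\frac 12|x|^2+c_0d^2=P^2_{(y_1+y_2)/4}+\Lambda x_n^2+\mathrm{const}$, shows $\tilde P<\max_i P^1_{y_i}\le u$ on the annulus $\{c_1d\le|x|\le 4c_1d\}$ (with $c_1=c_0^{1/3}$) while $\tilde P(z_1)>u(z_1)$, so that sliding $\tilde P$ down traps a contact point $x_0\in B_{c_1d}$ with touching in $B_{3c_1d}(x_0)\supset B_r(x_0)$ once $d\ge C_0r$, $C_0=(3c_1)^{-1}$. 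Note that $x_0$ is only within $c_1d\ (\ge r/3)$ of the bisector --- nothing like $\ll r$ is needed --- because the convex correction $\Lambda x_n^2$ travels with the test function on all of $B_{4c_1d}$ and is then rewritten as $P^2_{\bar y}+\Lambda((x-x_0)\cdot e_n)^2$ about the tangent $(-2)$-paraboloid at $x_0$. Finally, the admissibility of $x_0$ (that $x_0\in A_2(u)$, i.e.\ $P^2_{\bar y}\le u$ in all of $B_1$ and $P^2_{\bar y}\le 0$ outside $B_{3/4}$), which you dismiss as routine, is itself checked by a convexity argument: $\max_i P^1_{y_i}-P^2_{\bar y}$ is convex, positive on the annulus and negative at $x_0$, hence positive outside $B_{4c_1d}$. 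To complete your proof you would have to replace the sliding-to-the-bisector scheme by some such explicit construction; as written, the argument has a genuine gap.
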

\begin{proof} 
After dividing by $a$ we may assume that $a=1$.
Denote $$|y_1-y_2|=:d,$$ and assume by contradiction that the corresponding touching points $z_1, z_2$ 
are at at distance less than $c_0 d$  with $c_0$ to be made precise later. 
Then, we show below that $u$ can be touched by below 
at a point $x_0 \in A_2(u)$ in a $B_{r}(x_0)$ neighborhood 
by a polynomial of the form  $2 P_\Lambda(x-x_0)$, 
and we contradict the fact that $u \in {\mathcal P}_{\Lambda}^2(r)$.

Indeed, let $\Pi$ be the hyperplane determined by the intersection $\{P^1_{y_1}=P^1_{y_2}\}$ 
of the touching paraboloids at $z_1,z_2$ respectively. 
Let us assume, without loss of generality that the line segment joining $z_1$ and $z_2$ 
intersects $\Pi$ at $0$ and that $y_2-y_1=d \, e_n.$
In particular, $$\Pi: =\{x_n=0\}, \quad z_1 \in \{x_n \leq 0\}, \quad z_2 \in \{x_n \geq 0\}$$
and
$$|z_i| \leq c_0 \, d, \quad i=1,2.$$

Denote by $\tilde P$ the quadratic polynomial 
$$\tilde P(x):= P^1_{y_1}(x) + \frac{d}{2} x_n + \Lambda x_n^2 - \frac 1 2 |x|^2 +  c_0 d^2,$$
and notice that $\tilde P$ can be written as
$$\tilde P= P^2_{(y_1+y_2)/4} + \Lambda x_n^2 + const.$$

Let $$v(x):= u(x)-\tilde P(x),$$
and let $D$ denote the annular region
$$D:=\{c_1 d \leq |x| \leq  4c_1 d \},$$ with $c_1:=c_0^\frac 13 \gg c_0$ small, universal. We claim that 
\begin{equation}\label{vsmall}v \ge \max_{i=1,2} P^1_{y_i} - \tilde P > 0 \quad \text{on $D$, and} \quad v(z_1) <0.\end{equation}
This shows that $\tilde P + \min_{B_{4c_1d}}v$ touches $u$ from below 
at a point $x_0 \in B_{c_1 d}$ in the neighborhood $B_{4c_1d} \supset B_{3c_1d}(x_0)$. 
We contradict the hypothesis $u \in \mathcal P^2_\Lambda(r)$ 
provided that we show $x_0 \in A_2(u)$ and that 
we choose $C_0 = (3c_1)^{-1}$ in \eqref{ver} so that $3 c_1 d \geq r$.

Next we check that $x_0 \in A_2(u)$. 
Let $P^2_{\bar y}$ be the polynomial of opening $-2$ tangent to $\tilde P + \min_{B_{4c_1d}}v$ at $x_0$. 
Then $P^2_{\bar y} \le u$ in $B_{4c_1 d}$ and $P^2_{\bar y} \le \max_{i=1,2} P^1_{y_i}$ outside $B_{4c_1 d}$. 
The last assertion follows from the fact that 
the difference between $\max P^1_{y_i}$ and $P^2_{\bar y}$ is convex 
and by \eqref{vsmall} it is positive on $D$ and negative at $x_0$, therefore it must be positive also outside $B_{4c_1 d}$. 
In conclusion $P^2_{\bar y}$ touches $u$ by below at $x_0$ in the whole domain $B_1$ 
and also $P^2_{\bar y} \le 0$ outside $B_{3/4}$, hence $x_0 \in A_2(u)$.

It remains to prove the claim \eqref{vsmall}. For the first inequality we restrict, say, to the region $D \cap \{x_n \leq 0\}$, and we have that either
$$x_n \leq -\frac{c_1 d}{2} \quad \mbox{or} \quad  |x'| \geq \frac{c_1 d}{2}.$$
In the first case,
$$P_{y_1}^1 - \tilde P \ge   \left(\frac{c_1}{4}  - 16 \Lambda c_1^2 -  c_0\right)  d^2 >0,$$ 
by choosing $c_1$ sufficiently small. In the second case
$$P_{y_1}^1 - \tilde P \ge -x_n \left(\frac d2 + \Lambda x_n \right) + \left(\frac{c_1^2}{8} -  c_0 \right)d^2 > 0.$$
For the second part of \eqref{vsmall} we compute
$$v(z_1)=  P^1_{y_1}(z_1)- \tilde P(z_1)  \leq \left( \frac{c_0}{2}  + \frac 1 2 c_0^2  - c_0 \right) d^2 \leq  0.$$

\end{proof}

In what follows, 
we work on cubes $Q_\rho(\tilde x)$ of size $\rho$ defined as:
$$Q_\rho(\tilde x) := \{x \in \R^n : |x_i - \tilde x_i| < \rho/2, \quad i=1,\ldots, n.\}$$ Sometimes, for simplicity of notation, we drop the dependence on the center $\tilde x$ when there is no possibility of confusion.

We decompose the space in dyadic cubes of size $2^{-l}$, $l=0,1,2,...$ 
and by abuse of notation, denote by $Q_l$ a  cube of size $2^{-l}$ obtained from this decomposition. Set,
$$A_a^l(u):= \bigcup_{\overline{Q_l} \cap A_a(u) \neq \emptyset} Q_l,$$
that is $A_a^l(u)$ denotes the collection of the dyadic cubes of size $2^{-l}$ which intersect $A_a(u)$.

 Then, with this notation
Lemma \ref{touching} gives the following corollary.

\begin{cor} \label{percent}Assume $Q_\rho=Q_\rho(\tilde x) \subset B_{5/8}$ and that $$A_a(u) \cap Q_{\rho/2}(\tilde x) \neq \emptyset.$$ 
If $u \in {\mathcal P}_{\Lambda}^{4a}(r)$ and $r\le 2^{-l}$, then
$$|A_{2a}^l(u) \cap Q_\rho| \geq \eta |Q_\rho|,$$ for a universal constant $\eta.$
\end{cor}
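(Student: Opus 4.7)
The plan is to deduce the corollary by constructing a large family of opening-$(-2a)$ paraboloids touching $u$ from below at points in $Q_\rho$, and then invoking Lemma~\ref{touching} applied with $a$ replaced by $2a$ (so the hypothesis $u \in \mathcal{P}^{4a}_\Lambda(r)$ is exactly what Lemma~\ref{touching} requires). After rescaling to normalize $a = 1$, let $z^* \in A_1(u) \cap Q_{\rho/2}(\tilde x)$ be the given contact point with touching paraboloid $P^1_{y^*}$.

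For each ``test point'' $x_0 \in B_{\rho/8}(z^*)$, I would set $\tilde y = (x_0 + y^*)/2$ and initially pick the constant of $P^2_{\tilde y}$ so that $P^2_{\tilde y}$ is tangent to $P^1_{y^*}$ at $x_0$ (matching both value and gradient). A short computation gives $P^1_{y^*} - P^2_{\tilde y} = \tfrac12|x - x_0|^2 \ge 0$ (the Hessian difference is $-I$), so $P^2_{\tilde y} \le P^1_{y^*} \le u$ throughout $B_1$. Then slide $P^2_{\tilde y}$ upward by the maximal $\delta \ge 0$ keeping $P^2_{\tilde y} + \delta \le u$. Since $u(z^*) = P^1_{y^*}(z^*)$, one obtains $\delta \le (u - P^2_{\tilde y})(z^*) = \tfrac12|x_0 - z^*|^2 \le \rho^2/128$, and at the minimizer $z(\tilde y)$ of $u - P^2_{\tilde y}$ one has $\tfrac12|z(\tilde y) - x_0|^2 \le (u - P^2_{\tilde y})(z(\tilde y)) = \delta$, so $|z(\tilde y) - x_0| \le |x_0 - z^*| \le \rho/8$. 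Combined with $z^* \in Q_{\rho/2}(\tilde x)$ this forces $z(\tilde y) \in Q_\rho(\tilde x) \cap A_2(u)$. For $\rho$ small the admissibility $c_{\tilde y} + \delta \le (3/4 - |\tilde y|)^2$ is automatic, since $\delta = O(\rho^2)$ is much smaller than the slack inherited from $P^1_{y^*}$'s class constraint.

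Now apply Lemma~\ref{touching} with $a = 2$: any two such contact points with $|\tilde y_1 - \tilde y_2| \ge C_0 r$ satisfy $|z(\tilde y_1) - z(\tilde y_2)| \ge c_0|\tilde y_1 - \tilde y_2|$. The vertices $\tilde y$ range over $V = B_{\rho/16}((z^* + y^*)/2)$ (the image of $B_{\rho/8}(z^*)$ under $x_0 \mapsto \tilde y$), and a maximal $C_0 r$-separated net in $V$ has cardinality $\gtrsim (\rho/r)^n$, producing pairwise $\gtrsim r$-separated contact points in $Q_\rho$. Since each dyadic cube of size $2^{-l} \ge r$ contains at most $O((2^{-l}/r)^n)$ points of an $r$-separated set, these contact points populate at least $\gtrsim (\rho/2^{-l})^n$ distinct cubes of $A_2^l(u)$, contributing total measure $\gtrsim (\rho/2^{-l})^n \cdot (2^{-l})^n = \rho^n \simeq |Q_\rho|$. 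The main obstacle is the Step~2 estimate, namely confining the opening-$(-2)$ contact points to $Q_\rho$: sliding off the reference $P^1_{y^*}$ instead of from $-\infty$ is essential, because the $-I$ curvature gap builds a quadratic ``potential well'' that both permits initial tangency at arbitrary $x_0$ near $z^*$ and, via $\delta \le \tfrac12|x_0 - z^*|^2$, pins the eventual contact with $u$ to within $O(|x_0 - z^*|)$ of $x_0$.
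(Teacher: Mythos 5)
Your construction is the same one the paper uses: opening-$(-2a)$ paraboloids tangent to the original touching paraboloid at nearby points (your parameter $x_0$ is the paper's vertex $y$, giving the same vertices $(x_0+y^*)/2$), slid up to touch $u$, with the new contact points pinned inside $Q_\rho$ and then separated via Lemma~\ref{touching} applied with opening $2a$, followed by the same cube-counting. The argument is correct; the only differences are cosmetic bookkeeping (you separate vertices at scale $C_0 r$ and divide by the cube capacity $(2^{-l}/r)^n$, while the paper separates at scale $2C_02^{-l}$ so each dyadic cube holds boundedly many contact points), and your quantitative bound $\tfrac12|z-x_0|^2\le\delta\le\tfrac12|x_0-z^*|^2$ is a clean substitute for the paper's comparison $\tilde P\le P^a_{\bar y}$ outside $B_{\rho/2}(\bar x)$.
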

\begin{proof} Let  $\bar x \in A_a(u) \cap Q_{\rho/2}$ and let $P^a_{\bar y}$ be a corresponding touching paraboloid at $\bar x$. For a set of vertices $y \in B_{\rho/10}(\bar x)$, we consider the polynomial 
$$\tilde P(x):= P_{\bar y}^a(x) - \frac a 2 |x-y|^2 + c_y,$$
 with $c_y$ chosen so that $\tilde P$ becomes tangent to $u$ from below. Notice that
$ \tilde P(x)=P_{(y+\bar y)/2}^{2a}(x)$.
Since $y \in B_{\rho/10}(\bar x)$ we easily conclude that $\tilde P \le P^a_{\bar y}$ outside $B_{\rho/2}(\bar x)$. 
This means that the corresponding contact point between the polynomial $\tilde P$ and $u$ 
lies in $Q_\rho \cap A_{2a}(u)$.
 
 We now select vertices $y$ in $B_{\rho/10}(\bar x)$, with $\bar x$ among them, at distance larger than $2 C_0 2^{-l}$ from each other. By Lemma \ref{touching}, the corresponding contact points will also be at distance larger than $c_0 2^{-l}$ from each other, with $c_0, C_0$ as above. Since we can select  $C \rho^n 2^{ln}$ such vertices, we have at least as many contact points at distance larger than $c_0 2^{-l}$. Then our statement follows immediately from the definition of $A^l_{2a}(u).$
\end{proof}
Next, we obtain a generalized version of the corollary above, in which $A_a(u)$ intersects 
$Q_{3 \rho}$  instead of $Q_{\rho/2}$.

\begin{lem}\label{general} Assume $B_{6 \sqrt n \rho }(\tilde x) \subset B_{5/8}$ and that 
$$A_{a}(u) \cap Q_{3 \rho}(\tilde x) \neq \emptyset.$$
If $u \in  {\mathcal P}_{\Lambda}^{[a, C_1 a]}(r)$ with $r \leq \min\{c_1 \rho, 2^{-l}\},$ then
$$|A^l_{C_1 a}(u) \cap Q_\rho| \geq \eta |Q_\rho|,$$
with $C_1$, $c_1$ and $\eta$ universal.
\end{lem}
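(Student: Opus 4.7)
Corollary \ref{percent} handles the case in which $A_a(u)$ meets the \emph{centered} half-cube $Q_{\rho/2}(\tilde x)$; here, the contact point $\bar x$ is only assumed to lie in the six-times larger cube $Q_{3\rho}(\tilde x)$, so $|\bar x-\tilde x|_\infty\le 3\rho/2$. Since this distance is only a universal multiple of $\rho$, my plan is to chain a universally bounded number of applications of Corollary \ref{percent}, doubling the opening at each step, in order to produce a contact point inside $Q_{\rho/2}(\tilde x)$; one final application of Corollary \ref{percent} with the cube $Q_\rho(\tilde x)$ then yields the claimed density.

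Concretely, setting $\bar x_0:=\bar x$, I would build $\bar x_{k+1}\in A_{2^{k+1}a}(u)$ from $\bar x_k\in A_{2^k a}(u)$ by applying Corollary \ref{percent} to a cube $Q_{\rho_k}(z_k)$ of size comparable to $\rho$ whose middle half contains $\bar x_k$ (which fits inside $B_{5/8}$ by the hypothesis $B_{6\sqrt n\rho}(\tilde x)\subset B_{5/8}$). This gives density $\eta$ of $A^l_{2^{k+1}a}(u)$ inside $Q_{\rho_k}(z_k)$. A pigeonhole based on the positive universal constant $\eta$ then selects $\bar x_{k+1}$ in a sub-region biased towards $\tilde x$, enforcing a strict decrement $|\bar x_{k+1}-\tilde x|_\infty\le |\bar x_k-\tilde x|_\infty-c\rho$ for some universal $c>0$. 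After a universal number $K$ of such iterations, $\bar x_K\in Q_{\rho/2}(\tilde x)$; choosing $C_1:=2^{K+2}$ guarantees that every intermediate opening $4\cdot 2^k a$ invoked by Corollary \ref{percent} lies in $[a,C_1a]$, while $c_1$ is chosen small universal so that $r\le c_1\rho$ keeps the vertex-sliding scale $C_0 r$ of Lemma \ref{touching} much smaller than the cube sides used at each iteration.

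The main obstacle is realizing this quantitative strict decrease in $|\bar x_k-\tilde x|_\infty$ at each pigeonhole step. Since $\eta$ is generally small (and no better than a small universal constant coming from Lemma \ref{touching} via the vertex-count in Corollary \ref{percent}), one cannot just take "the half closer to $\tilde x$"; instead, the iteration cubes and their sub-regions must be set up so that a shifted sub-region whose complement has volume at most $(1-\eta)|Q_{\rho_k}|$ is forced to lie strictly on the $\tilde x$-side of $\bar x_k$. A careful volume bookkeeping, using that the total required displacement $3\rho/2$ is a universal multiple of $\rho$, then confirms that a universal number of iterations suffices, which is exactly what allows $K$ and hence $C_1$ to depend only on $n$ and $\Lambda$.
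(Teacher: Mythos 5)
There is a genuine gap: the decrement step in your pigeonhole iteration cannot be made to work with what Corollary \ref{percent} provides. That corollary only yields a \emph{small} universal density $\eta$ (it comes from counting $c_0$-separated contact points produced by $C_0 r$-separated vertices), and it says nothing about \emph{where} inside the cube $Q_{\rho_k}(z_k)$ the cubes of $A^l_{2^{k+1}a}(u)$ sit. To force a new contact point into a ``good'' region $G$ lying on the $\tilde x$-side of $\bar x_k$ you would need $|Q_{\rho_k}(z_k)\setminus G| < \eta\,|Q_{\rho_k}(z_k)|$ (note that your phrasing ``complement of volume at most $(1-\eta)|Q_{\rho_k}|$'' only gives $|G|\ge \eta|Q_{\rho_k}|$, which does not force $G$ to meet another set of measure $\eta|Q_{\rho_k}|$). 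But this is geometrically impossible: to apply Corollary \ref{percent} you must keep $\bar x_k$ in the middle half $Q_{\rho_k/2}(z_k)$, and then in the coordinate direction realizing $|\bar x_k-\tilde x|_\infty$ the cube $Q_{\rho_k}(z_k)$ necessarily extends at least $\rho_k/4$ \emph{beyond} $\bar x_k$ on the far side of $\tilde x$. Hence the ``bad'' slab of points no closer to $\tilde x$ than $\bar x_k$ occupies at least a quarter of the cube, far more than $\eta$, and the $\eta$-fraction of contact cubes may lie entirely there. So no step of the iteration guarantees any progress toward $\tilde x$, and the chaining never reaches $Q_{\rho/2}(\tilde x)$. (Boosting the density first via a Calder\'on--Zygmund iteration is not available either: that is Lemma \ref{iteration}, whose proof uses the very lemma you are proving.)

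The missing idea, which is how the paper proceeds, is a barrier (sliding) argument rather than a density pigeonhole: after normalizing $\tilde x=0$, $\rho=1$, $a=1$, one takes the touching polynomial $P^1_{\bar y}$ at $\bar x\in Q_3$ and slides $P^1_{\bar y}+\alpha\tilde\phi+t$ from below, where $\tilde\phi$ is a truncation of $\phi(x)=|x|^{-\gamma}-\beta$ with $\gamma=4\Lambda$ vanishing on $\p B_{6\sqrt n}$. The first contact point $x_0$ must lie in $B_{1/2}$: if $|x_0|\ge 1/2$, the Taylor expansion of $\phi$ at $x_0$ shows the comparison function dominates a paraboloid of opening $-a_0$ plus an extra tangential term $\frac{\Lambda a_0}{2}((x-x_0)\cdot\xi)^2$ on $B_{\bar c\,|x_0|}(x_0)$, contradicting $u\in\mathcal P_\Lambda^{[1,C_1]}(r)$ once $r\le c_1\rho$ and $C_1$ is a suitable universal constant bounding the opening $a_0$. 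This transports the contact point from $Q_{3\rho}$ into the central region at the cost of a universal increase of the opening (this is exactly why the hypothesis is an interval $[a,C_1a]$), and then a single application of Corollary \ref{percent} gives the measure estimate. Your proposal never produces such a mechanism for moving the contact point, so as written it does not prove the lemma.
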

\begin{proof} After a dilation, translation and multiplication with a constant we may assume that $\tilde x=0$, $\rho=1$ and $a=1$.
In view of Corollary \ref{percent}, it suffices to show that there exists a point 
$x_0 \in A_{C}(u) \cap Q_{1/2}$, for $C$ large universal.

To prove the existence of $x_0$, we define the following barrier:
\begin{equation}\label{barrier}
\phi(x):= |x|^{-\gamma}-\beta, \quad \quad x \neq 0
\end{equation}
with $\gamma=4\Lambda$,
$\beta>0$ chosen so that
$$\phi(x)=0 \quad \text{on $\p B_{6\sqrt{n}}.$}$$
Denote by $P_\sigma$ the quadratic polynomial tangent to $\phi$ by below on the sphere of radius $\sigma>0$.
Using the homogeneity of $|x|^{-\gamma}$, we find  
\begin{equation}\label{sd}
\phi \ge P_\sigma=P^d_0 \quad \mbox{ with $d=\gamma \sigma^{-\gamma-2}$}
\end{equation}
and if $|x_0|=\sigma$, then by the Taylor expansion of $\phi$ near $x_0$ we get
\begin{equation}\label{Taylor}(\phi - P_{x_0/2}^{2d})(x) \ge \frac{(\gamma+1) d}{2}  ((x-x_0)\cdot \xi )^2,   \quad \mbox{if} \quad  x \in B_{\bar c \sigma}(x_0),\quad \quad \xi=\frac {x_0}{|x_0|},\end{equation}
with $\bar c, $ sufficiently small universal.

Let 
$$\tilde \phi(x)= \phi(x) \quad \text{in $|x| \geq \frac 1 4$}, \quad \tilde \phi(x) = P_{1/4}, \quad \text{on $B_{1/4}$.}$$ 
For $\bar x \in A_1(u) \cap Q_3,$ we call $P^1_{\bar y}$ a corresponding touching polynomial of vertex $\bar y.$

We slide $\phi_t= P^1_{\bar y} + \alpha \tilde \phi + t$, (for some constant $\alpha>0$) from below till it touches $u$ for the first time at some point $x_0$. Since $\phi \leq 0$ outside $B_{6\sqrt n}$ and $u\geq P^1_{\bar y}$ with equality at $\bar x \in Q_3 \subset B_{6\sqrt n}$, the first contact point $x_0$ occurs for some value of $t \le 0$ and $x_0$ must belong to $B_{6\sqrt n}$.

We claim that $x_0 \in B_{1/2}.$ Otherwise, at $x_0$ we have a tangent polynomial of opening $a_0:=1+ 2 \alpha d$ by below
with $\sigma=|x_0|$ and $d$ as in \eqref{sd}, and 
\begin{equation}\label{alpha}\alpha (\gamma +1)d \ge \Lambda (1+2 \alpha d)=\Lambda a_0,\end{equation}
provided that $\alpha$ is chosen sufficiently large universal. 
We use \eqref{Taylor}-\eqref{alpha} together with the fact that $|x_0| \geq \frac 12$ to contradict that $u \in {\mathcal P}_{\Lambda}^{[1,C_1]}(r)$,  as long as  $r \le c_1 \le \bar c / 2$, and $C_1 \geq 1+ 2 \alpha \gamma 2^{\gamma+2} \ge a_0.$

Thus the touching occurs in the  ball $B_{1/2}\subset Q_1$  and the desired point $x_0$ is obtained.

\end{proof}

A standard iteration argument and the Calderon Zygmund cube decomposition give the following result.

\begin{lem} \label{iteration} 

Assume $B_{6 \sqrt n \rho }(\tilde x) \subset B_{5/8}$ and $Q_\rho(x)$ is a dyadic cube such that 
$$A_{a}(u) \cap Q_{3 \rho}(\tilde x) \neq \emptyset.$$
If $u \in  {\mathcal P}_{\Lambda}^{[a, C^k_1 a]}(r)$ with $r \leq c_1 2^{-l}, and $ $2^{-l} \le \rho$, then
$$|A^l_{C^k_1 a}(u) \cap Q_\rho| \geq (1-(1-\eta)^k) |Q_\rho|.$$

\end{lem}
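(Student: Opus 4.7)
The plan is to prove Lemma \ref{iteration} by induction on $k$, using the standard Calderón-Zygmund dyadic cube decomposition lemma (as in Lemma 4.2 of \cite{CC}) with Lemma \ref{general} providing the one-step density improvement.

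The base case $k=1$ is exactly the statement of Lemma \ref{general}. For the inductive step, assume the conclusion for $k$ and suppose $u \in \mathcal{P}_\Lambda^{[a,C_1^{k+1}a]}(r)$; since $\mathcal P_\Lambda^{[a,C_1^{k+1}a]}(r) \subset \mathcal P_\Lambda^{[a,C_1^k a]}(r)$, the inductive hypothesis gives $|Q_\rho \setminus D_k| \le (1-\eta)^k |Q_\rho|$, where we write $D_j := A^l_{C_1^j a}(u)$. Set $A := Q_\rho \setminus D_{k+1}$ and $B := Q_\rho \setminus D_k$; because $D_k \subset D_{k+1}$ we have $A \subset B$. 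The goal is to apply the CZ lemma with threshold $\delta = 1-\eta$ and deduce $|A| \le (1-\eta)|B| \le (1-\eta)^{k+1}|Q_\rho|$.

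The verification boils down to checking the CZ propagation property: for any dyadic subcube $Q \subset Q_\rho$ (of side $\rho_Q \ge 2^{-l}$, the atomic scale of $D_{k+1}$) whose parent $\tilde Q$ meets $D_k$, we must show $|A \cap Q| \le (1-\eta)|Q|$. If $\tilde Q \cap D_k \ne \emptyset$, then some $2^{-l}$-cube of $D_k$ meets $\tilde Q$; since $2^{-l} \le \rho_Q$, the corresponding contact point of $A_{C_1^k a}(u)$ lies inside the centered cube $Q_{3\rho_Q}(y_Q)$ (with $y_Q$ the center of $Q$). The hypothesis $r \le c_1 2^{-l} \le c_1 \rho_Q$ and the containment $u \in \mathcal P_\Lambda^{[C_1^k a,\, C_1^{k+1} a]}(r)$ now let us apply Lemma \ref{general} at level $C_1^k a$ to conclude $|D_{k+1}\cap Q| \ge \eta |Q|$, equivalently $|A \cap Q| \le (1-\eta)|Q|$. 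The CZ lemma then yields the desired bound and closes the induction.

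The only subtlety I anticipate is bookkeeping for the geometric hypothesis $B_{6\sqrt n \rho_Q}(y_Q) \subset B_{5/8}$ required to invoke Lemma \ref{general} on subcubes near the boundary of $Q_\rho(\tilde x)$; this is a routine margin adjustment from the standing assumption $B_{6\sqrt n \rho}(\tilde x) \subset B_{5/8}$ and can be absorbed into the universal constants. No essential new idea beyond combining Lemma \ref{general} with the CZ dyadic decomposition is required.
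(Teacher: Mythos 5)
Your proof is correct and follows essentially the same route as the paper: the paper also iterates Lemma \ref{general} via a Calder\'on--Zygmund dyadic decomposition, phrasing the step from one level to the next as a stopping-time decomposition into ``clean'' cubes (whose tripled cubes meet the contact set) rather than invoking the abstract CZ lemma of \cite{CC}, but the content is identical. The bookkeeping points you flag (the margin needed for $B_{6\sqrt n \rho_Q}(y_Q)\subset B_{5/8}$ on subcubes, and the dyadic nesting of the $2^{-l}$-cubes of $A^l_{C_1^k a}(u)$ inside the parent cube so the contact point indeed lies in $Q_{3\rho_Q}(y_Q)$) are indeed routine.
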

\begin{proof} After a dilation and a multiplication by a constant we may assume as above that $\rho=1$, $\tilde x=0$, $a=1$. The case $k=1$ holds by Lemma \ref{general}.

For $k=2$ we apply the Calderon Zygmund decomposition to the set $A_{C_1}(u)$ in $Q_1$: we decompose dyadically the cube $Q_1$, up to scale $2^{-l}$, and at each step $j$ in our decomposition, we only split the cubes $\tilde Q_j$ which have non-empty intersection with the set of contact points $A_{C_1}(u)$. A remaining cube in the decomposition (which has empty intersection with $A_{C_1}(u)$) will be called ``clean". Let $\tilde Q_j$ be a clean cube of size $2^{-j} \geq 2^{-l},$ and by construction the dilation of factor $3$ around the center $\tilde Q_j$ intersects $A_{C_1}(u)$. Lemma \ref{general} applied to the cube $\tilde Q_j$ with $a=C_1$ gives
$$|A_{C_1^2}^l(u) \cap \tilde Q_j| \geq \eta |\tilde Q_j|.$$
Since this inequality holds for all clean cubes, we conclude that
$$|A^l_{C_1^2}(u) \cap (Q_1 \setminus A_{C_1}^l(u))| \geq \eta (|Q_1| - |A_{C_1}^l(u)|).,$$
which gives the desired conclusion. The general case follows by the argument above $k-1$ times.
\end{proof}

Next we use a covering argument and obtain the following version of Lemma \ref{iteration} for functions in the general class $\mathcal P_\Lambda^I(r)$.

\begin{prop}\label{PP}
 Let $u$ be a continuous function in $B_1$, with $u \geq 0$ and assume that $$u(\bar x) \leq 1 \quad \text{for some $\bar x \in B_{1/2}$.}$$ Given $\mu>0$, there exist constants $\bar c, \bar C >0$ depending only on $n$ and $\Lambda$ and $C(\mu)$ depending also on $\mu$ such that if
$$u \; \in \; {\mathcal P}_{\Lambda}^{I}(r), \quad \quad \mbox{with} \quad I=[1,C(\mu)],$$
for some $r \leq \bar c$, 
then  
$$|A^{l}_{C(\mu)}(u) \cap B_{1/2}| \geq (1-\mu) |B_{1/2}|, \quad \quad \mbox{with} \quad  \bar C r \le 2^{-l}.$$
\end{prop}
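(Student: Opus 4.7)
\emph{Plan.} I would prove Proposition~\ref{PP} in two stages: first, construct a single initial contact point $z_0 \in A_{C_*}(u) \cap B_{5/8}$ for some universal $C_* = C_*(n)$; second, use Lemma~\ref{iteration} together with a covering-and-propagation argument to lift this to the density estimate on $B_{1/2}$.

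\emph{Stage 1 (initial contact point).} Slide the paraboloid $P^{C_*}_{\bar x}(x) = -\tfrac{C_*}{2}|x-\bar x|^2 + c$ upward in $c$. Since $\bar x \in B_{1/2}$ has distance $\geq 1/4$ from $\p B_{3/4}$, the paraboloid lies in the admissible class of Definition~\ref{p2} as long as $c \leq C_*/32$, while $P^{C_*}_{\bar x}(\bar x) \leq u(\bar x)\leq 1$ forces $c \leq 1$. Hence for $C_* \geq 32$ the sliding is unobstructed by admissibility and stops at some $c^* \leq 1$ where $P^{C_*}_{\bar x}$ first touches $u$ from below at a point $z_0$. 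The identity $c^* - \tfrac{C_*}{2}|z_0-\bar x|^2 = u(z_0) \geq 0$ gives $|z_0-\bar x|^2 \leq 2/C_*$, and taking $C_* = 512$ forces $|z_0-\bar x| \leq 1/16$, so $z_0 \in A_{C_*}(u) \cap B_{9/16}$.

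\emph{Stage 2 (covering and propagation).} Pick a universal dyadic $\rho_0 = 2^{-j_0}$ small enough that $B_{6\sqrt n \rho_0}(\tilde x) \subset B_{5/8}$ for every center $\tilde x$ of a cube in the grid below, and tile $B_{9/16}$ by the $N = N(n)$ disjoint dyadic cubes $\{Q^{(i)}_{\rho_0}\}_{i=1}^N$ of size $\rho_0$ intersecting $B_{9/16}$, ordered so that $Q^{(1)}$ contains $z_0$ and each $Q^{(i+1)}$ is face-adjacent to some $Q^{(j)}$, $j \leq i$. Set $\bar c := c_1 \rho_0/2$ and $\bar C := 1/c_1$, with $c_1$ from Lemma~\ref{iteration}, and for $r \leq \bar c$ pick a dyadic $2^{-l}$ strictly between $r/c_1$ and $\rho_0$. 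Fix $K = K(\mu,n)$ so that $(1-\eta)^K (9/8)^n \leq \mu$. By induction on $i$ I claim
\[
|A^l_{C_1^{iK} C_*}(u) \cap Q^{(i)}_{\rho_0}| \;\geq\; \bigl(1-(1-\eta)^K\bigr)\,|Q^{(i)}_{\rho_0}|.
\]
The base case $i=1$ is Lemma~\ref{iteration} applied with $a = C_*$, $k = K$, using $z_0 \in A_{C_*}(u) \cap Q_{3\rho_0}(\tilde x_1)$. For the inductive step, the positivity of the measure in $Q^{(j)}$ produces a dyadic $2^{-l}$-sub-cube of $Q^{(j)}$ meeting $A_{C_1^{jK} C_*}(u)$; by face-adjacency $Q^{(j)} \subset Q_{3\rho_0}(\tilde x_{i+1})$, so a further invocation of Lemma~\ref{iteration} with $a = C_1^{jK}C_*$, $k = K$ yields the bound at scale $C_1^{(j+1)K}C_* \leq C_1^{(i+1)K}C_*$. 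Setting $C(\mu) := C_1^{NK} C_*$ keeps every intermediate opening in $[1,C(\mu)]$, so the hypothesis $u \in \mathcal P^{[1,C(\mu)]}_\Lambda(r)$ legitimizes each invocation; monotonicity $A^l_{a_1} \subset A^l_{a_2}$ for $a_1 \leq a_2$ and summation over $i$ then give $|A^l_{C(\mu)}(u) \cap B_{1/2}| \geq (1-\mu)|B_{1/2}|$.

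\emph{Main obstacle.} The main care lies in the multi-scale bookkeeping: taking $C_*$ large enough that $z_0$ lands strictly inside $B_{5/8}$ (not merely in $\bar B_{3/4}$), picking $\rho_0$ small enough that every tiling cube satisfies the interior condition $B_{6\sqrt n \rho_0}(\tilde x) \subset B_{5/8}$ of Lemma~\ref{iteration}, and letting $C(\mu)$ grow like $C_1^{N\log(1/\mu)}$ so that the paraboloid opening stays in the admissible range through all $N$ steps of the propagation.
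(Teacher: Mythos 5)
Your proof is correct and takes essentially the same route as the paper: the paper's (sketched) argument likewise slides an admissible paraboloid from $\bar x$ to produce an initial contact point in $A_{C}(u)$ near $\bar x$, then covers $B_{1/2}$ by finitely many dyadic cubes of universal size and applies Lemma~\ref{iteration} to each, and your chained induction with openings $C_1^{iK}C_*$ merely spells out the propagation step the paper leaves implicit. The only caveats are harmless bookkeeping (e.g.\ the factor $(9/8)^n$ should be enlarged slightly to account for cubes protruding beyond $B_{9/16}$, and neighbor containment in the open tripled cube should be checked up to boundary null sets), which only affect the choice of $K$ and $\rho_0$.
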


The proposition states that if we decompose the space in cubes of size $\bar C r$ and then we consider the collection of those cubes which intersect the contact set 
$A_{C(\mu)}(u)$ then they cover all but a $\mu$-fraction of $B_{1/2}$. We recall that $A_{C(\mu)}(u)$ represent the set of points where $u$ admits a tangent polynomial of opening $- C(\mu)$ by below and which also are less than $0$ outside $B_{3/4}$.

Clearly Proposition \ref{PP} follows from Lemma \ref{iteration} since the existence of $\bar x$ implies
$A_{C}(u) \cap B_{1/16}(\bar x) \ne \emptyset$ for some large $C$, and then we can cover $B_{1/2}$ with a finite collection of dyadic cubes $Q_\rho$, with $\rho$ universal, for which we can apply Lemma \ref{iteration}.

\begin{rem}\label{r3}
It is straightforward to generalize Definition \ref{p2} to include also closed sets $\Gamma \subset B_1 \times \R_+ \subset \R^{n+1}$ (which above can be thought as of the graph of $u$), and to define the corresponding contact sets $A_a(\Gamma)$ and $A^l_a(\Gamma)$.
Then Proposition \ref{PP} holds also in the setting of sets $\Gamma \in \mathcal P^I_\Lambda(r)$ since we did not use so far the graph property of $u$.  
\end{rem}
We are now ready for the proof of Proposition \ref{HI}.

\

{\it Proof of Proposition $\ref{HI}$.} We choose $\mu = 1/8$ in Proposition \ref{PP} and let $C_2=C(\mu)$ and $l$ to be the largest $l$ for which $\bar C r \le 2^{-l}$. Then
\begin{equation}\label{31}
|A_{C_2}^l(u)\cap B_{1/2}|\geq \frac{7}{8} |B_{1/2}|.
\end{equation}
Now, let $\tilde x \in Q_l \cap A_{C_2}(u)$ and assume $u \in \mathcal W_{M}^{C_2}(2 \sqrt n \bar C r)$ (see Definition \ref{w2}). 
Then
$$|\{u \leq 2 M C_2\} \cap B_{2 \sqrt n \bar C r}(\tilde x)| \geq (1-\delta)|B_{2 \sqrt n \bar C r}(\tilde x)|,$$
which implies
$$|\{u \leq 2 M C_2\} \cap Q_l| \ge \frac {9}{10}|Q_l|,$$
provided that $\delta=\delta(n)$ is chosen sufficiently small. Summing over all cubes $Q_l$ included in $A^l_{C_2}(u)$, together with \eqref{31} and the fact that $l$ is sufficiently large gives
$$ |\{u \leq 2 M C_2\} \cap B_{1/2}| \ge \frac {3}{4}|B_{1/2}|.$$
The proposition is proved after relabeling the constants.
\qed

\section{Applications}
\subsection{Discrete equations.} Harnack inequality for discrete difference equations was obtained by Hung-Ju and Trudinger in \cite{HT}. Here we verify that the setting we developed for the classes $\mathcal P$ and $\mathcal W$ applies in this context. We consider
for simplicity the discrete operator involving only the neighbors along the coordinate axes ($\eps>0$ small)
\begin{equation}\label{discrete}
Lu(x):= \sum_{i=1}^n \lambda_i(x) u^\eps_{ii}(x), \quad x\in B_1 \cap \eps \mathbb{Z}^n,
\end{equation}
$$ u^\eps_{ii}(x):= \frac{u(x+\eps e_i)+ u(x-\eps e_i) -2 u(x)}{\eps^2},$$
with 
\begin{equation}\label{discrete_elliptic}
0<\lambda_{min} \leq \lambda_i(x) \leq \lambda_{max}, \quad x\in B_1 \cap \eps \mathbb{Z}^n.
\end{equation}
Below we show that Theorem \ref{main} gives the H\"older continuity for bounded solutions of $Lu=0.$ 

\begin{thm} Assume that 
$$Lu=0, \quad |u|\leq 1 \quad \text{in $B_1$}.$$ Then,
$$|u(x)-u(y)| \leq C |x-y|^\gamma, \quad \text{in $B_{1/2}\cap \eps \mathbb Z^n$},$$
with $C, \gamma$ universal independent of $\eps.$
\end{thm}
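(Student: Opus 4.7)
The plan is to verify that $u$, extended continuously to $B_1$, satisfies the two-scale hypotheses of Theorem \ref{main} with $r$ proportional to $\eps$, and then iterate the resulting oscillation decay on dyadic balls. Since $L$ is linear, both $u$ and $1-u$ solve $L\cdot = 0$; after normalization we may take $u \in [0,1]$. I would extend $u$ to a continuous function $\bar u : B_1 \to [0,1]$ by piecewise linear interpolation on a Kuhn triangulation of $\eps \mathbb{Z}^n$, so that $\bar u$ agrees with $u$ at every grid vertex.

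For the $\mathcal P_\Lambda^{[\eta,1]}(r)$ condition, I would fix $\Lambda$ universal with $\Lambda > n\lambda_{max}/\lambda_{min}$ and take $r = C\eps$ for $C$ large universal. The key mechanism is a discrete-Laplacian contradiction: if $\phi(x) := aP_\Lambda(x-x_0)$ with $a \in [\eta,1]$ touches $\bar u$ from below in $B_r(x_0)$, then $u(v) \geq \phi(v)$ at every grid vertex $v \in B_r(x_0)$. A sliding/minimization argument then identifies a grid contact vertex $v_* \in B_{r-\eps\sqrt n}(x_0)$ at which $u-\phi$ attains its minimum over $B_r(x_0)\cap \eps\mathbb{Z}^n$, and whose grid neighbors $v_* \pm \eps e_i$ remain in $B_r(x_0)$. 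Because $\phi$ is quadratic, its discrete and continuous second differences coincide, so
\[
u^\eps_{ii}(v_*) \;\geq\; \phi^\eps_{ii}(v_*-x_0) \;=\; a\bigl(\Lambda \xi_i^2 - 1\bigr).
\]
Multiplying by $\lambda_i \geq \lambda_{min}$ and summing gives $0 = Lu(v_*) \geq a(\Lambda \lambda_{min} - n\lambda_{max}) > 0$, a contradiction. Applying the same reasoning to $1-\bar u$ yields $1-\bar u \in \mathcal P_\Lambda^{[\eta,1]}(r)$ as well.

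For the $\mathcal W_M^a(\bar C r)$ condition, I would derive a strong pointwise bound rather than just a measure estimate. If $\bar u(x_0) \leq a$, then at least one vertex $v$ of the cell containing $x_0$ satisfies $u(v) \leq a$, and the identity $\sum_i \lambda_i [u(v+\eps e_i) + u(v - \eps e_i)] = 2u(v) \sum_i \lambda_i$ combined with $u \geq 0$ yields $u(v \pm \eps e_i) \leq (2n\lambda_{max}/\lambda_{min})\, a$ at each grid neighbor. Iterating this propagation over $O(\bar C)$ grid steps and using the convex-combination structure of $\bar u$, I would obtain $\bar u \leq Ma$ pointwise on $B_{\bar C r}(x_0)$ for some universal $M$, stronger than what Definition \ref{whi} requires (the $\delta = 0$ case). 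Theorem \ref{main} then yields $\operatorname{osc}_{B_{1/2}} \bar u \leq 1 - \eta$, provided $\eps$ is small enough that $r = C\eps \leq c_0$. Iterating this on dyadic balls (each a rescaled copy with the same ellipticity constants and rescaled grid $2^k \eps$) gives $\operatorname{osc}_{B_{2^{-k}}} u \leq C(1-\eta)^k$ for $2^{-k} \gtrsim \eps$, hence the Hölder estimate with exponent $\gamma = -\log_2(1-\eta)$; the range $|x-y| \lesssim \eps$ is absorbed by the uniform continuity of $\bar u$.

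The main technical subtlety is in verifying $\mathcal P$ when the touching point $x_0$ is not a grid vertex: one must carefully arrange for a grid contact point to lie in the interior of $B_r(x_0)$ so that all discrete neighbors remain where $u - \phi \geq 0$ is controlled. Two structural features make this possible: the saddle-point obstruction (the Hessian of $\phi$ has eigenvalue $a(1-\Lambda) < 0$ in the direction $\xi$ for $\Lambda > 1$) rules out touching in the interior of a simplex; and the strict quadratic growth $\tfrac{a\Lambda}{2}((v-x_0)\cdot \xi)^2$ of $\phi$ in the $\xi$ direction, together with $u \in [0,1]$, forces any grid contact vertex to lie within a small universal multiple of $\eps$ of $x_0$, hence well inside $B_r(x_0)$ once $C$ is chosen large enough.
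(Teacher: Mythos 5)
Your overall architecture coincides with the paper's: verify that $u$ (and $1-u$) lies in $\mathcal P_{\Lambda}^{[\eta,1]}(r)$ with $r\sim\eps$ by a discrete maximum-principle computation at a touching point, verify the $\mathcal W$ class with $\delta=0$ by propagating the bound $u\le a$ through $O(\bar C)$ lattice neighbors using $Lu=0$ and $u\ge 0$, then apply Theorem \ref{main} and iterate on dyadic balls. The $\mathcal W$ verification, the choice of $\Lambda$, and the iteration are all fine and essentially identical to the paper's sketch.

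The gap is precisely in the step you flag as the main subtlety: converting the continuum touching point $x_0$ of the interpolation $\bar u$ into a lattice point at which $Lu$ can be evaluated. Neither mechanism you invoke accomplishes this. (i) The concavity of $\bar u-\phi$ along $\xi$ (note the sign: $D^2\phi$ has eigenvalue $a(\Lambda-1)>0$ in the direction $\xi$, so it is $\bar u-\phi$ that is concave along $\xi$) only excludes touching in the interior of a full-dimensional cell, or of a face whose tangent space contains a direction $v$ with $\Lambda(v\cdot\xi)^2>|v|^2$; on a face essentially orthogonal to $\xi$ (e.g.\ $\xi=e_n$ and a facet lying in a hyperplane $\{x_n=\mathrm{const}\}$, which the Kuhn triangulation has) the restriction of $\bar u-\phi$ is strictly convex and can attain its minimum at a non-vertex point. (ii) The claim that any lattice minimizer of $u-\phi$ over $B_r(x_0)\cap\eps\mathbb Z^n$ must lie within $O(\eps)$ of $x_0$ is unjustified: $\bar u\ge\phi$ in $B_r(x_0)$ is perfectly compatible with $u-\phi$ vanishing at lattice points arbitrarily close to $\p B_r(x_0)$, in which case the lattice minimum is attained only at points some of whose neighbors leave $B_r(x_0)$, and the key inequality $u^\eps_{ii}(v_*)\ge\phi^\eps_{ii}(v_*)$ is unavailable. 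The obvious fallback, evaluating $Lu$ at the cell vertex $v_0$ where $(u-\phi)(v_0)\le C(n)a\Lambda\eps^2$, also fails: dividing that error by $\eps^2$ produces a term of order $a\Lambda$, comparable to the main term $a(\Lambda\lambda_{min}-n\lambda_{max})$ whose positivity you need.

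The step is repairable by a standard modification: compare at lattice points with $\psi(x):=\phi(x)-\frac a2|x-x_0|^2$, which still satisfies $\psi\le\bar u$ in $B_r(x_0)$ with equality at $x_0$, and in addition $\bar u-\psi\ge\frac a2|x-x_0|^2$. Then lattice points within $\eps$ of $\p B_r(x_0)$ have $u-\psi\ge\frac a8 r^2$, while some vertex of the cell of $x_0$ has $u-\psi\le C(n)a\Lambda\eps^2$; so for $r\ge C(n)\Lambda^{1/2}\eps$ the lattice minimum of $u-\psi$ is attained at an interior point $v_*$, where $u^\eps_{ii}(v_*)\ge a(\Lambda\xi_i^2-2)$ and $0=Lu(v_*)\ge a(\Lambda\lambda_{min}-2n\lambda_{max})>0$ for $\Lambda$ large, the desired contradiction. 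Alternatively, the paper sidesteps the issue entirely by applying the classes to the graph of $u$ over the lattice (Remark \ref{r3}), so that every touching point is automatically a lattice point. A last cosmetic point: the final reduction needs no continuity argument at scales below $\eps$, since distinct points of $B_{1/2}\cap\eps\mathbb Z^n$ are already at distance at least $\eps$.
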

\noindent{\it Sketch of the Proof.} By linearity, it suffices to show that, 
$$u \in {\mathcal P}_{\Lambda}^{1}(r) \cap {\mathcal W}_{M}^{1}(\bar C r),$$ 
for some appropriate $\Lambda$ and $M$ large independent of $\eps$ (to be specified below) and with $r \leq c_0$ and $\bar C$ universal as given in Theorem \ref{main}.

Here we work with the graph of $u$ (see Remark \ref{r3}) and the inequality in the class ${\mathcal W}_{M}^{a}(\rho)$ is understood to hold everywhere (with $\delta=0$) instead of in the $\mathcal H^n$-measure sense. Another alternative way is to think that $u$ is extended from the lattice to the whole space by some linear interpolation.

It is straightforward to see that for  $\Lambda$ large
$$u \in {\mathcal P}_{\Lambda}^{1}(2\eps).$$
Indeed, if $$P_\Lambda(x)=\frac \Lambda 2 (x \cdot \xi)^2- \frac 12 |x|^2+ l(x), \quad \quad \mbox{$l$ linear},$$ 
touches $u$ from below, say at $0$ in $B_{2\eps},$
and we choose $\Lambda$ such that $$(\Lambda/n-1)\lambda_{min}>(n-1)\lambda_{max},$$ we obtain that $Lu(0) \ge LP_\Lambda(0)>0$ and get a contradiction.

Also, if 
$$u\geq 0 \quad \text{in $B_\rho\cap \eps \mathbb Z^n$}, \quad \rho \leq K \eps$$ and $$u(0)\leq 1,$$ it immediately follows from the fact that $Lu(0)=0$ that $$u(\eps e_i)\leq C(n, \lambda_{min}, \lambda_{max}).$$
Hence,
$$u \leq C(K) \quad \text{in $B_\rho$}.$$ Choosing $K=2\bar C$, $M=C(K)$ and $r= 2 \eps$ we obtain that $u \in {\mathcal W}^1_M(\bar C r)$ as desired.
\qed

\subsection{Homogenization.} Next we obtain H\"older estimates in the context of homogenization by considering linear equations with measurable coefficients whose ellipticity constants might degenerate in the interior of $Q_\eps$ the cubes of size $\eps$.  
Let $L_\eps u$ be a second order operator 
\begin{equation}\label{hom}
L_\eps u(x):= a^{ij}(\frac{x}{\eps}) u_{ij}(x)
\end{equation}
with $A(x):=a^{ij}(x)$ continuous coefficients in $\R^n$ satisfying 
\begin{equation}\label{hom_elliptic}
\tilde\lambda_{min}(x) I \leq A(x) \leq \tilde\lambda_{max}(x) I, \quad A(x) \neq 0,
\end{equation}
and 
$$\tilde \lambda_{min}, \tilde \lambda_{max} \geq 0 \quad \text{periodic of period $1$}$$
with
\begin{equation}\label{away}0<\lambda_{min} \leq \tilde \lambda_{min}(x) \leq \tilde \lambda_{max}(x)\leq \lambda_{max}, \quad \text{in $Q_1 \setminus D$},\end{equation}
for some set $D \subset \subset Q_1.$

Similarly as in the previous application, we can obtain the following result.

\begin{thm}\label{THo} Assume that 
$$L_\eps u=0, \quad |u|\leq 1 \quad \text{in $B_1$}.$$ Then,
$$|u(x)-u(y)| \leq C |x-y|^\gamma, \quad \text{in $B_{1/2}$} \quad \mbox{if $|x-y| \ge \eps$,}$$
with $C, \gamma$ depending on $n$, $\lambda_{min}$, $\lambda_{max}$ and $dist(\bar D, \p Q_1)$.
\end{thm}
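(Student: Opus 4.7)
The plan is to mimic the verification done for the discrete equation application: I would show that any $L_\eps$-solution $u$ with $|u| \leq 1$ satisfies, together with $1-u$,
\[
u,\ 1-u \ \in \ \mathcal P_\Lambda^{[\eta,1]}(r) \cap \mathcal W_M^a(\bar C r)
\]
at scale $r = c_0 \eps$ for constants $\Lambda, M, \eta, a$ depending only on $n$, $\lambda_{min}$, $\lambda_{max}$, and $d_0 := \mathrm{dist}(\bar D, \p Q_1)$. As in the discrete case (Remark \ref{r3}), I interpret the $\mathcal W$ condition pointwise ($\delta = 0$). Theorem \ref{main} will then give oscillation decay by a factor $(1-\eta)$ from $B_1$ to $B_{1/2}$, and iterating dyadically down to scale $\eps$ yields the claimed H\"older bound. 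Throughout, let $N_\eps := \{x \in \R^n : x/\eps \notin \bar D + \mathbb Z^n\}$; by $D \subset\subset Q_1$ the set $N_\eps$ is open and path-connected with thickness $\geq \eps d_0$, and $L_\eps$ is uniformly elliptic on it.

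For the $\mathcal P$ check, suppose $aP_\Lambda(\cdot - x_0)$, $a \in [\eta, 1]$, touches $u$ from below at $x_0$ in $B_r(x_0)$, and set $w := u - aP_\Lambda(\cdot - x_0) \geq 0$, with $w(x_0) = 0$. A direct computation,
\[
L_\eps P_\Lambda(x) = \Lambda\, A(x/\eps)\xi\cdot\xi - \mathrm{tr}\,A(x/\eps) \geq \Lambda\lambda_{min} - n\lambda_{max} \geq c_1 > 0 \quad \text{on } N_\eps,
\]
for $\Lambda$ chosen large in terms of $\lambda_{min}, \lambda_{max}$, shows that $w$ is a strict $L_\eps$-supersolution on $N_\eps \cap B_r(x_0)$. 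If $x_0 \in N_\eps$, the strong maximum principle in a uniformly elliptic neighborhood of $x_0$ forbids the interior minimum, giving an immediate contradiction. If $x_0$ lies in a bad cell, I would slide the polynomial: replace $x_0$ by $x_0 + h$, $h \in B_{\eps d_0/4}$, and adjust the additive constant so that the new polynomial touches $u$ from below at some $z(h)$; after an arbitrarily small perturbation of $a$ to make $D^2u(x_0) - D^2 P_\Lambda$ invertible, the contact map $h \mapsto z(h)$ is locally a diffeomorphism, hence the contact set covers a neighborhood of $x_0$ of positive measure. Since $N_\eps$ has density $\geq 1-|D| > 0$ in every period cell, some $z(h)$ lands in $N_\eps$, and we are reduced to the previous case.

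For the $\mathcal W$ check, assume $u \geq 0$ in $B_{2\bar Cr}(x_0) \subset B_1$ with $u(x_0) \leq a$, and set $v(y) := u(x_0 + \eps y)/a$ on $B_{2K}(0)$, where $K := \bar C r/\eps$ is a fixed universal constant; then $v \geq 0$, $v(0) \leq 1$, and $v$ solves a $1$-periodic equation, uniformly elliptic on the translated good set $\tilde N$ and with $\tilde A \neq 0$ everywhere. If $0$ lies in a translated bad cell $\tilde E$, apply the weak minimum principle to the strict supersolution $v - \kappa|y|^2$ (valid since $\tilde A \neq 0$ yields $-2\kappa\,\mathrm{tr}\,\tilde A < 0$) and let $\kappa \to 0$ to obtain $\inf_{\p \tilde E} v \leq v(0) \leq 1$, hence a point $y_1 \in \tilde N \cap B_{\sqrt n}$ with $v(y_1) \leq 1$; otherwise set $y_1 := 0$. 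Classical Harnack on the connected uniformly elliptic set $\tilde N \cap B_{2K}$ then gives $\sup_{\tilde N \cap B_K} v \leq C$, with $C = C(K, \lambda_{min}, \lambda_{max}, d_0)$; finally, the weak maximum principle in each bad cell $\tilde E' \subset B_K$ (again via $\tilde A \neq 0$) propagates this bound since $\p \tilde E' \subset \tilde N$. Setting $M := C$ closes the verification.

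The main obstacle will be the $\mathcal P$ check at a bad touching point, where pointwise comparison fails in the degenerate region. The perturbation/sliding argument must be engineered so that the contact set has positive measure, which rests on the invertibility of the contact map (enforced by a negligible perturbation of $a$) together with the positive density of $N_\eps$ supplied by the hypothesis $D \subset\subset Q_1$. Absent this delicate step, the remainder is a standard adaptation of Harnack-type maximum principle arguments.
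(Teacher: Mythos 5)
Your overall plan (verify membership in the classes $\mathcal P$ and $\mathcal W$ for $u$ and $1-u$ at scale $r\sim\eps$, apply Theorem \ref{main}, iterate) is exactly the paper's, and your $\mathcal W$ verification is essentially the paper's argument: minimum principle in the degenerate cell (using $\mathrm{tr}\,A>0$) to reach a good point, classical Harnack on the connected uniformly elliptic network near the cell boundaries, and the maximum principle to propagate the bound back into the cells. The problem is the $\mathcal P$ check when the contact point $x_0$ lies in a bad cell, which is the crux of the theorem, and your sliding/diffeomorphism argument there has a genuine gap. First, the claim that after an arbitrarily small perturbation of $a$ the map $h\mapsto z(h)$ is a local diffeomorphism presupposes that $u$ is twice differentiable at the contact point with $D^2u(x_0)-aD^2P_\Lambda$ invertible; but inside $D$ the equation only gives $A\ge 0$, $A\neq 0$ (e.g.\ $A=e_n\otimes e_n$), so $u$ restricted to the bad cell can be as rough as a Lipschitz function with a corner (think $u\sim|x_1|$ there), for which the contact points of a whole open set of translated paraboloids concentrate on a hyperplane of measure zero; perturbing $a$ does not repair this, and no ABP-type measure estimate is available precisely because no nondegenerate equation holds at $x_0$. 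Second, even if you had a contact set of positive measure near $x_0$, the conclusion ``some $z(h)$ lands in $N_\eps$'' does not follow from the positive density of $N_\eps$ in the period cell: the bad region $\eps(D+k)$ also has positive measure and may contain a full ball around $x_0$ of radius much larger than $\eps d_0/4$, so the contact set produced by sliding vertices in $B_{\eps d_0/4}$ can sit entirely inside it.

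The paper avoids any infinitesimal argument inside the bad cell and instead works globally at scale $\eps$: after rescaling ($\tilde u(x)=\eps^{-2}u(\eps x)$), the touching is assumed to hold on a ball $B_{2\sqrt n}$ covering the whole period cell, so $\tilde u\ge P_\Lambda\ge -2n$ there; the specific structure of $P_\Lambda$ forces $\tilde u+2n\ge c\Lambda$ at some point of $\p Q_1$ (with $c$ depending on $\mathrm{dist}(\bar D,\p Q_1)$); Harnack in the uniformly elliptic connected neighborhood of $\p Q_1$ spreads this lower bound over all of $\p Q_1$; and the weak minimum principle (valid in the degenerate interior since $\mathrm{tr}\,A>0$) pushes it inside, contradicting $\tilde u(x_0)=P_\Lambda(x_0)=0$ once $\Lambda$ is chosen large. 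This use of the large quadratic term of $P_\Lambda$ together with Harnack along the cell boundary and the maximum principle across the degenerate core is the key idea your proposal is missing; without it, the two-scale hypothesis $\mathcal P_\Lambda^{I}(r)$ cannot be verified at bad contact points.
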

\noindent{\it Sketch of the Proof.} It suffices to show that
$$u \in {\mathcal P}_{\Lambda}^{1}(r) \cap {\mathcal W}_{M}^{1}(\bar C r),$$ 
as long as $\eps \le c$ for some appropriate $\Lambda$, $M$, $r$.
We rescale $u$ as
$$\tilde u(x)= \eps^{-2} u(\eps x).$$
Then, $$L \tilde u:= a^{ij}(x)\tilde u_{ij}(x)=0 \quad \text{in $\R^n.$}$$ By assumption \eqref{away}, a polynomial of the form $ P_\Lambda$ cannot touch $\tilde u$ in $Q_1 \setminus D$, as long as $\Lambda$ is large enough (depending only on $n, \lambda_{min}, \lambda_{max}$) so that $L \tilde P_\Lambda >0$. Now, assume for simplicity that $P_\Lambda=\frac \Lambda 2 x_n^2 - \frac 12 |x|^2$ touches $\tilde u$ from below say at $0 \in D$ in $B_{2\sqrt n}$. Then, $$\tilde u \geq - 2 n \quad \text{in $B_{2\sqrt n}$,}$$
and for $c>0$ depending on $dist(\overline{D}, \p Q_1)$
$$\tilde u(\bar x) + 2 n\geq c\Lambda , \quad \text{at some $\bar x \in \p Q_{1}$}.$$ By Harnack inequality applied to $\tilde u+ 2n$ in a neighborhood of $\p Q_1$ (depending on $dist(\overline D, \p Q_1)$) we get that,
$$\tilde u + 2 n \geq \tilde c \Lambda \quad \text{on $\p Q_{1}$.}$$ Choosing $\Lambda$ large this gives
$$\tilde u \geq c' >0 \quad \text{on $\p Q_1$}.$$ By the maximum principle the inequality holds also in $Q_1$ hence $\tilde u (0)>0$, a contradiction. Rescaling back this shows that
$$u \in {\mathcal P}_\Lambda^1(r), \quad \quad r:=2\sqrt n \eps.$$ 

To prove the other inclusion, assume
$$\tilde u (x_0)=1,$$ for some $x_0 \in Q_1$. Again, by the maximum principle,
$\tilde u (\bar x) \leq 1$ at some $\bar x \in \p Q_1$, and by Harnack inequality,
$\tilde u \leq C(K)$ on $\p Q_K$ and therefore in $Q_K$ as well. We take $K=8 n \bar C$ and we obtain
$\tilde u \leq M$ in $B_{2 \sqrt n \bar C }(x_0)$. Thus,
$$u \in {\mathcal W}^1_M(\bar Cr),$$
as desired.

\qed

\begin{rem}
Assume that $u$ satisfies an oscillatory fully nonlinear equation
$$F(D^2u, \frac x \eps)=0,$$
with $F$ an elliptic operator which is periodic in the second variable, and with ellipticity constants which might degenerate when the second variable belongs to a set $D$ as above. Then Theorem \ref{THo} can be applied to differences between translations of $u$, and then it follows that $u$ satisfy a uniform $C^{1,\alpha}$ interior estimate up to scale $\eps$.  
\end{rem}

\subsection{Non-local operators} We show that solutions to integro-differential equations with measurable kernels satisfy the hypotheses of Theorem \ref{main}.

Let $\sigma \in (0,2)$ and let 
$K_x(y)$ be a symmetric, measurable kernel proportional to the kernel $K_\sigma$ of $\triangle ^{\sigma/2}$
$$\lambda_{min} \, \, K_\sigma(y) \le K_x(y) \le \lambda_{max} \, \, K_\sigma(y), \quad \quad K_x(y)=K_x(-y),$$
with $\lambda_{min}$, $\lambda_{max} >0$ and
$$K_\sigma(y):=(2-\sigma)|y|^{-n-\sigma}.$$

Consider the integro-differential operator
$$\mathcal Lu(x):= \int_{\R^n}(u(x+y)-u(x))K_x(y)dy$$ and let
$u$ be a viscosity solution to 
$$\mathcal Lu = f.$$
For definitions and properties of viscosity solutions to integro-differential equations as above, we refer the reader to \cite{CS}.

As an application of our Theorem \ref{main}, we recover the H\"older continuity property of $u$, with uniform estimates as $\sigma \to 2$, which is due to Caffarelli and Silvestre \cite{CS}.

\begin{thm}\label{nonlocal} Let $u$ be a viscosity solution to 
\begin{equation} \mathcal L u = f \quad \mbox{in $B_1$}. \end{equation} 
Then, \begin{equation}\|u\|_{C^\alpha(B_{1/2})} \leq C \left (\|f\|_{L^\infty} + \|u\|_{L^\infty(B_1)} + (2-\sigma)\int_{\R^n \setminus B_1} \frac{|u(y)|}{|y|^{n+\sigma}} dy \right). \end{equation} The constants $\alpha>0,$ and $C$ depend only on $n$, $\lambda_{min}$, $\lambda_{max}$ if $\sigma$ is close to 2.
 \end{thm}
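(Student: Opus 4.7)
\emph{Proof plan.} The plan is to deduce Theorem \ref{nonlocal} from Theorem \ref{main} by verifying, uniformly in $\sigma$ close to $2$, that a suitably normalized solution belongs to both classes $\mathcal P$ and $\mathcal W$. By linearity of $\mathcal L$ and standard reductions (dividing $u$ by $\|u\|_{L^\infty(B_1)} + \|f\|_\infty + \mathrm{Tail}(u)$, subtracting constants, and rescaling), it suffices to prove the oscillation decay $\mathrm{osc}_{B_{1/2}} u \le 1-\eta$ assuming $0\le u\le 1$, $\|f\|_\infty \le \epsilon$ and $(2-\sigma)\int_{\R^n\setminus B_1}|u|/|y|^{n+\sigma}\,dy \le \epsilon$ for $\epsilon$ small universal; the usual dyadic iteration then yields the claimed H\"older estimate. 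Since $\mathcal L 1 = 0$, the function $1-u$ solves $\mathcal L(1-u)=-f$ with the same normalization, so it is enough to check that $u \in \mathcal P^{[\eta,1]}_\Lambda(r)\cap \mathcal W^a_M(\bar C r)$ for some $r \le c_0$ with $\Lambda$, $M$ chosen uniform in $\sigma$.

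\emph{Verification of $\mathcal P$.} Fix $\Lambda$ large depending on $n$ and the ellipticity so that $\Lambda\lambda_{min}\ge 4n\lambda_{max}$, and take $r=c_0$. Suppose, toward contradiction, that a polynomial $a'P_\Lambda(\cdot-x_0)$ with $a'\in[\eta,1]$ touches $u$ from below at $x_0$ in $B_{c_0}(x_0)$. Form the standard nonlocal test function $\tilde\psi$ equal to $a'P_\Lambda(\cdot-x_0)$ in $B_{c_0}(x_0)$ and to $u$ in the complement; the viscosity supersolution property gives $\mathcal L\tilde\psi(x_0)\le f(x_0)\le \epsilon$. Using the evenness of $K_{x_0}$, the quadratic form of $P_\Lambda$, and $\int_{B_r}(h\cdot\xi)^2 K_\sigma = \tfrac{1}{n}\int_{B_r}|h|^2 K_\sigma \sim r^{2-\sigma}$, the inner integral is bounded below by
$$\frac{a'}{2}\int_{B_{c_0}}\big(\Lambda(h\cdot\xi)^2-|h|^2\big)K_{x_0}(h)\,dh \;\ge\; c\,a'\Lambda\,c_0^{2-\sigma}.$$
The outer integral, using $\|u\|_\infty\le 1$ on $B_1$, the tail bound, and the $(2-\sigma)$ normalization of $K_\sigma$, satisfies
$$\Bigl|\int_{\R^n\setminus B_{c_0}}(u(x_0+h)-\psi(x_0))K_{x_0}(h)\,dh\Bigr| \;\le\; C(2-\sigma)c_0^{-\sigma} + C\epsilon.$$
For $\sigma$ close enough to $2$ that $(2-\sigma)c_0^{-\sigma}\ll c\eta\Lambda$ and $\epsilon$ small, the inner dominates, giving $\mathcal L\tilde\psi(x_0)>\epsilon$, a contradiction.

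\emph{Verification of $\mathcal W$.} At the fixed scale $\bar C c_0$, the required inequality is a nonlocal Weak Harnack statement. If $u\ge 0$, $u(x_0)\le a$, and $B_{2\bar C c_0}(x_0)\subset B_1$, the rescaled function $v(z)=u(x_0+\bar C c_0 z)/a$ is a nonnegative supersolution of an equation of the same form in $B_2$ with $v(0)\le 1$ and small right-hand side and tail (the $(2-\sigma)$ factor in $K_\sigma$ absorbing the rescaling factors). The Caffarelli-Silvestre \cite{CS} nonlocal Weak Harnack then yields a universal constant $M$ (uniform as $\sigma\to 2^-$, since the CS constants approach their local counterparts) with $|\{v\le M\}\cap B_1|\ge (1-\delta)|B_1|$ for any prescribed small $\delta$; choosing $\delta=\delta(n)$ as required by Theorem \ref{HI} and setting $a=a(M)$, $\eta=\eta(M)$ from Theorem \ref{main}, we obtain $u\in \mathcal W^a_M(\bar C c_0)$.

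\emph{Main obstacle.} The core difficulty is the balance in the $\mathcal P$ verification: the inner contribution scales like $\Lambda a r^{2-\sigma}$ and the outer like $(2-\sigma)r^{-\sigma}$, which are of different orders in $r$ and would balance at $r\sim\sqrt{(2-\sigma)/\Lambda a}$. The essential point that makes everything uniform is the factor $(2-\sigma)$ built into $K_\sigma$, which forces the outer to vanish at any fixed small scale $r=c_0$ as $\sigma\to 2^-$; this is the quantitative form of the two-scale behavior described in the introduction, and it is precisely what allows $\Lambda$, $M$, and $c_0$ to be chosen independently of $\sigma$. Once the hypotheses of Theorem \ref{main} are in place for both $u$ and $1-u$, the conclusion $\mathrm{osc}_{B_{1/2}} u \le 1-\eta$ and a standard dyadic iteration deliver the H\"older estimate with exponent $\alpha = \log_2(1-\eta)^{-1}$.
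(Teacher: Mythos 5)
Your reduction to Theorem \ref{main} and your verification of the class $\mathcal P$ are essentially sound (and close to the computation in the paper's Lemma \ref{HI_nonlocal}): at a fixed scale the inner term is of order $\eta\Lambda c_0^{2-\sigma}$ while the error from the rest of space is $O((2-\sigma)c_0^{-\sigma})+C\epsilon$, so for $\sigma$ close to $2$ the comparison property holds. The genuine gap is in the verification of the class $\mathcal W$ at the fixed scale $\bar C c_0$. There you do not prove anything: you invoke the Caffarelli--Silvestre weak Harnack inequality with constants uniform as $\sigma\to 2^-$, which is precisely the deep uniform estimate that Theorem \ref{nonlocal} is meant to recover from the quasi-Harnack framework. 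Quoting it makes the whole application circular in purpose (if one grants that input, the H\"older estimate follows from \cite{CS} directly, with no need of Theorems \ref{main}--\ref{HI}), and, more to the point, no elementary argument can replace it at a fixed scale: the only ``free'' measure estimate available from the equation at a contact point is the Chebyshev bound $C\ge (2-\sigma)\int_{B_\rho(x_0)}(u-P)K_\sigma$, whose strength is proportional to $(2-\sigma)\rho^{-\sigma}$, and this quantity tends to $0$ as $\sigma\to 2$ when $\rho=\bar C c_0$ is fixed. So at your choice of scales the $\mathcal W$ membership is exactly as hard as the theorem itself.

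The missing idea is the $\sigma$-dependent choice of the crossover scale, which your own ``main obstacle'' paragraph brushes aside. The paper takes $r$ with $r^{2-\sigma}=\tfrac12$ (so $r\to 0$ as $\sigma\to 2^-$, and $r\le c_0$ once $\sigma$ is close to $2$), and works with the relaxed classes of Definitions \ref{p2} and \ref{w2}. With Definition \ref{p2} the touching object is a \emph{global} paraboloid $P^1_{y_0}\le u$ in $B_1$ plus the local $\Lambda$-correction on $B_r(x_0)$; hence outside $B_r(x_0)$ one pays only $\int_{B_{1/4}}|y|^2K_\sigma\,dy=O(1)$ and the tail \eqref{infinity}, not the term $(2-\sigma)r^{-\sigma}$ your fixed-scale bound produces, so the $\mathcal P$ verification survives even at this very small $r$. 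Then, at scale $\rho=\bar C r$, one has $(2-\sigma)\rho^{-\sigma}\ge C_0$ large for $\sigma$ near $2$, and the weak Harnack property in the sense of Definition \ref{w2} follows from the one-line Chebyshev estimate above with the trivial constant $M=2$: if $u-P>2$ on a $\mu$-fraction of $B_\rho(x_0)$ then $C\ge 2(2-\sigma)\mu|B_1|\rho^{-\sigma}$, forcing $\mu\le\delta(n)$. This is how the paper verifies both classes directly from the equation, with no external regularity input; to repair your proposal you should either adopt this scale choice (and the relaxed definitions, which your fixed-scale $\mathcal P$ argument does not need but the small-scale one does), or supply an independent proof of the uniform weak Harnack estimate you are currently citing.
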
 
 
 Once we establish the following lemma, the proof follows easily by scaling and it is left to the reader. 
We assume that $\sigma$ is sufficiently close to $2$.
 
 \begin{lem}\label{HI_nonlocal}  Let $u$ be a continuous function satisfying
 $$\mathcal Lu(x) \leq 1, \quad \mbox{and} \quad u \geq 0 \quad \text{in $B_1$}$$
 and
 \begin{equation}\label{infinity}
\int_{\R^n \setminus B_1} u^- \, \,  K_\sigma dy \leq 1.\end{equation}
Then there exists $\Lambda$ depending on $n, \lambda_{min}, \lambda_{max}$, such that 
$$u \in {\mathcal P}_{\Lambda}^{a}(r) \cap {\mathcal W}_{2}^{a}(\bar C r), \quad \quad \forall a \in [1,\infty),$$
for some $r$ small, and with $\bar C=\bar C(n, \Lambda)$ the constant from Theorem $\ref{HI}.$
\end{lem}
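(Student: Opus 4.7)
The two inclusions are verified separately using the viscosity supersolution property of $u$ in the sense of Caffarelli-Silvestre: whenever a smooth $\varphi$ touches $u$ from below in a neighborhood of $x_0$, the test function $\tilde\varphi$ (equal to $\varphi$ on this neighborhood and to $u$ outside) satisfies $\mathcal{L}\tilde\varphi(x_0)\leq 1$. The main ingredients throughout are the ellipticity bounds $\lambda_{\min}K_\sigma\leq K_{x_0}\leq \lambda_{\max}K_\sigma$, the symmetry of $K_{x_0}$ which kills linear Taylor terms, the elementary identities $\int_{B_r}|z|^2 K_\sigma\,dz=\omega_n r^{2-\sigma}$ and $\int_{|z|>r}K_\sigma\,dz = \omega_n(2-\sigma)r^{-\sigma}/\sigma$, together with $u\geq 0$ in $B_1$, $u(x_0)\leq a$ (from \eqref{01}), and the tail assumption $\int u^- K_\sigma\leq 1$.

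For the $\mathcal{P}_\Lambda^a(r)$ inclusion, suppose the polynomial $\varphi = P_y^a + \frac{a\Lambda}{2}((x-x_0)\cdot\xi)^2\chi_{B_r(x_0)}$ touches $u$ from below at some $x_0\in A_a(u)$. The local part of $\mathcal{L}\tilde\varphi(x_0)$ on $B_r$ reduces, by symmetry, to $\frac{a}{2}\int_{B_r}[\Lambda(z\cdot\xi)^2-|z|^2]K_{x_0}(z)\,dz$, which is bounded below by $\frac{a\omega_n}{2n}[\Lambda\lambda_{\min}-n\lambda_{\max}]\,r^{2-\sigma}$. The remaining contributions (the intermediate piece coming from the negative quadratic of $P_y^a$ and the far-field tail) are each bounded below by a multiple of $-a - 1$, where the contribution outside $B_1$ is controlled by $\int u^- K_\sigma\leq 1$. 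Since $r^{2-\sigma}\to 1$ as $\sigma\to 2^-$, choosing $\Lambda$ large depending only on $n,\lambda_{\min},\lambda_{\max}$ makes the local piece dominant and yields $\mathcal{L}\tilde\varphi(x_0)>1$, contradicting the viscosity inequality.

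For the $\mathcal{W}_2^a(\bar C r)$ inclusion, fix $x_0\in A_a(u)$ with touching polynomial $P_y^a$ and set $\rho=\bar C r$. Apply the viscosity inequality with $\varphi = P_y^a$ at a cutoff scale $\delta$ (comparable to but smaller than $\rho$), and split the tail integral into positive and negative parts $J_+ - J_-$, so that $J_+\leq 1+Ca+J_-$. The key improvement over a naive bound is to exploit the touching inequality $u\geq P_y^a$ in $B_1$, which gives the pointwise bound
\[
(u(x_0)-u(x_0+z))_+\leq (P_y^a(x_0)-P_y^a(x_0+z))_+\leq a|z|+\frac{a}{2}|z|^2,\quad x_0+z\in B_1,
\]
using $|\nabla P_y^a(x_0)|\leq a$ from \eqref{01}. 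A direct computation then yields $J_-\leq Ca$, and hence $J_+\leq C'a$. Chebyshev applied on the set $\{u>u(x_0)+a\}\cap\{\delta<|z|<\rho\}$, together with $K_{x_0}\geq \lambda_{\min}(2-\sigma)\rho^{-n-\sigma}$ on $B_\rho$, yields density at most $C\rho^\sigma/(2-\sigma)$, plus the contribution $(\delta/\rho)^n$ of the discarded central ball. Since on $B_\rho$ (with $\rho\leq 1/2$) the inclusion $\{u>P_y^a+2a\}\subset\{u>u(x_0)+a\}$ holds, choosing $\delta=c\rho$ with $c$ small and $r$ of order $(2-\sigma)^{3/4}$ makes the total density at most $\delta(n)$.

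The main obstacle is reconciling the two computations uniformly in $\sigma$: the $\mathcal{P}$-argument works for any $r\in (0,c_0)$ (since $r^{2-\sigma}\to 1$), while the $\mathcal{W}$-argument forces $r$ of order $(2-\sigma)^{3/4}\to 0$ as $\sigma\to 2^-$. This is acceptable because the constants in Theorems \ref{main} and \ref{HI} do not depend on $r$, so the universal H\"older estimate of Theorem \ref{nonlocal} still follows by applying Theorem \ref{main} and rescaling.
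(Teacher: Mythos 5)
Your proof is correct and follows essentially the same route as the paper: membership in $\mathcal P_\Lambda^a(r)$ is checked by evaluating the operator at the contact point with the bump test function and using $\int_{B_r}|z|^2K_\sigma\,dz=\omega_n r^{2-\sigma}$ together with the tail bound \eqref{infinity}, while membership in $\mathcal W_2^a(\bar C r)$ follows from a Chebyshev estimate on the nonlocal integral at contact points, valid as long as $(2-\sigma)\rho^{-\sigma}$ is large, which your choice $r\sim(2-\sigma)^{3/4}$ guarantees for $\sigma$ near $2$. The only cosmetic differences from the paper are the specific choice of $r$ (the paper fixes $r^{2-\sigma}=\tfrac12$) and running Chebyshev on $\{u>u(x_0)+a\}$ with the conversion via \eqref{01} rather than directly on $\{u-P_y^a>2a\}$.
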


\begin{proof} We remark that 
\begin{equation}\label{41}
 \int_{B_1} |y|^2 \, K_\sigma dy =(2-\sigma)\int_0^1 t^{2-n-\sigma} \omega_nt^{n-1}dt =\omega_n,
\end{equation}
where $\omega_n$ represents the surface area of the unit sphere in $\R^n$.

Let us choose $r$ so that
\begin{equation}\label{42}
\int_{B_{r}} |y|^2 \, K_\sigma dy= \frac 1 2 \int_{B_1} |y|^2 \, K_\sigma dy = \frac {\omega_n}{2}\quad \Longrightarrow \quad r^{2-\sigma}= \frac 12,
\end{equation}
We check that $$u \in \mathcal P^a_\Lambda(r), \quad \quad \forall a \in [1, \infty),$$
for some large $\Lambda$. It suffices to consider the case $a=1$, since the general case follows after dividing by $u$ by $a$. Assume by contradiction that (see Definition \ref{p2}) 
$$ P(x) + \frac \Lambda 2 ((x-x_0) \cdot \xi)^2 \chi_{B_r (x_0)},$$
touches $u$ from below at $x_0$ in $B_1$, where $P=P_{y_0}^1$ is a quadratic polynomial of opening $-1$ that is below $0$ outside $B_{3/4}$. We reach a contradiction since
\begin{align*}
\mathcal Lu(x_0)  \ge  \int _{B_r(x_0)} & (u-P)(y)K_{x_0}(y) dy + \int_{B_{1/4} (x_0)}  (P(y)- P(x_0))K_{x_0}(y)dy  \\
& + \int_{\mathbb{R}^n \setminus B_{1/4}(x_0) }(u(y)-u(x_0))K_{x_0}(y)dy,\\
 \ge c(n) \Lambda & \lambda_{min} - C(n) \lambda_{max}- C(n) \lambda_{max} >1 
\end{align*}
where we have used that $u(x_0)=P(x_0) \le 1$, and \eqref{infinity}, \eqref{41}, \eqref{42} together with
$$ \int _{\mathbb{R}^n \setminus B_{1/4}} K_\sigma dy \le C(n).$$

Next we check that 
\begin{equation}\label{43}
u \in \mathcal W^a_2(\rho), \quad \quad \forall a \in [1, \infty), \quad \mbox{if} \quad (2-\sigma) \rho^{-\sigma} \ge C_0.
\end{equation}
As above, it suffices to consider the case $a=1$ and assume that a polynomial $P=P^1_{y_0}$ touches $u$ by below at $x_0$. Using the computation above, together with $\mathcal Lu(x_0) \le 1$ (and replacing $r$ by $\rho$) we find 
 $$C \ge (2-\sigma)\int_{B_{\rho}(x_0)}\frac{(u-P)(y)}{|y|^{n+\sigma}} dy,$$
for come $C$ depending on $n$, $\lambda_{min}$, $\lambda_{max}$.
If $$\mu:=\frac{|\{u-P > 2\} \cap B_{\rho}(x_0)|}{|B_\rho|},$$ then we obtain
$$C \ge 2 (2-\sigma) \mu |B_1| \rho^{-\sigma},$$
and we obtain the desired conclusion $\mu \le \delta(n)$ provided that we choose $C_0$ sufficiently large.

Finally, it is straightforward to check that $\rho=\bar C r$ with $r$ defined in \eqref{42} satisfies the inequality in \eqref{43} if $\sigma$ is close to 2.

\end{proof}

\begin{rem}
If $ \sigma$ is not close to 2 (or if we allow the constants to depend on $\sigma$) then the weak Harnack inequality is satisfied at scale 1, and there is no need to consider the class $\mathcal P$. Indeed, the last argument gives $u \in \mathcal W_M^a(1/2)$ for an appropriate constant $M$ and all $a \ge 1$, and the conclusion of Theorem \ref{HI} is already satisfied. 
\end{rem}

\section{Quasi-Minimizers of perimeter} 

Let $E$ be a measurable set and $\Omega$ an open set in $\R^n$. We denote by $P(E, \Omega)$ the so-called perimeter of $E$ in $\Omega$ and refer to \cite{G} for definitions and properties of the perimeter functional and Caccioppoli sets.

We say that $E$ is a quasi-minimizer of the perimeter functional in $\Omega$ if there exist constants $\kappa,\alpha>0$ such that
\begin{equation}P(E, B_\rho(x)) \leq (1+\kappa\rho^{\alpha})P(F,B_\rho(x))
\end{equation}
for all $x \in \p E$, and all measurable sets $F$ which coincide with $E$ outside $B_{\rho}(x)\subset \Omega.$

Classical results of \cite{A,T,Bo} give that the boundary of a quasi-minimizer $E$ can be split into the union of a $C^{1,\alpha/2}$
relatively open hypersurface and a closed singular set of Hausdorff dimension at most $n-8$. These results extend the well-know theory for minimizers, due to De Giorgi.

%For the proof of a classical compactness result for quasi-minimizers and several other standard properties we refer the reader to \cite{A}.

Here we want to use the Harnack inequality established in Section 2 to show that quasi-minimizers enjoy a flatness regularity theory, as in  the classical case of minimizers, using a non-variational approach. Precisely, the following theorem holds and can be obtained with a slight modification of our arguments from Section 2, combined with the technique developed in \cite{S} for perimeter minimizers.

\begin{thm}\label{quasi} Assume that the open set $E$ with  
\begin{equation}\label{flat}\{x_n \leq -\eps\} \cap \mathcal C \subset E \subset \{x_n \leq -\eps\}\end{equation}
is a quasi-minimizer in the cylinder $\mathcal C:= \{|x'| < 1\} \times \{|x_n|<1\}.$ Then $$\p E \cap \{|x'|<1/2\}$$ is a $C^{1,\gamma}$ surface if $\eps$ is small enough depending only on $n, \kappa, \alpha.$ 
\end{thm}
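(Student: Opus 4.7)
\smallskip
\noindent\emph{Proof plan.} The strategy is to view $\partial E$ as a closed set $\Gamma\subset \mathcal{C}$, parametrized as a multi-valued graph over $\{x_n=0\}$ by virtue of the flatness hypothesis, and to verify that $\Gamma$ and its reflection across $\{x_n=0\}$ both lie in
$$\mathcal P^{[\eta,1]}_\Lambda(r)\cap \mathcal W^{a}_M(\bar C r)$$
for some universal $\Lambda, M$, constants $\eta,a$ as in Theorem~\ref{main}, and a scale $r=r_0(n,\kappa,\alpha)\leq c_0$. Thanks to Remark~\ref{r3}, the set-valued analogue of Theorem~\ref{main} then yields an oscillation decay for $\Gamma$ of the form $\mathrm{osc}_{\{|x'|<1/2\}}\Gamma\leq (1-\eta)\,\mathrm{osc}_{\mathcal C}\Gamma$, modulo a tilting of the reference hyperplane. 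Iterating this improvement of flatness along dyadic scales, exactly as in the non-variational scheme of \cite{S} for perimeter minimizers, delivers the $C^{1,\gamma}$ regularity of $\partial E\cap\{|x'|<1/2\}$.

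\smallskip
\noindent\emph{Verifying $\mathcal P$ via quasi-minimality.} Suppose by contradiction that a paraboloid $aP_\Lambda(\cdot-x_0)$ touches $\Gamma$ from below at $x_0$ in a ball $B_r(x_0)\subset\mathcal{C}$, with $a\in[\eta,1]$ and $\Lambda$ large. Build a competitor $F$ by replacing $E\cap B_r(x_0)$ with the sub-level set of the tangent paraboloid; a direct area comparison gives a perimeter saving
$$P(E,B_r(x_0))-P(F,B_r(x_0))\;\geq\; c(n)\,a\Lambda\,r^{n}.$$
Combining this with the quasi-minimality inequality $P(E,B_r(x_0))\leq(1+\kappa r^\alpha)P(F,B_r(x_0))$ and the upper bound $P(F,B_r(x_0))\leq C(n)r^{n-1}$ yields $a\Lambda\leq C(n)\kappa r^{\alpha-1}$, which is violated once $r\leq r_0=r_0(n,\kappa,\alpha,\Lambda)$. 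Hence $\Gamma\in\mathcal P^{[\eta,1]}_\Lambda(r)$ for all $r\leq r_0$, provided $\Lambda$ is chosen universally large.

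\smallskip
\noindent\emph{Verifying $\mathcal W$ and iteration.} At the slightly larger scale $\rho=\bar C r$, the weak Harnack property is checked by a standard barrier argument: given a paraboloid $P_y^a$ touching $\Gamma$ from below at $x_0\in A_a(\Gamma)$, translate $P_y^a$ upward continuously; each intermediate level must, by the $\mathcal P$-property established above, continue to touch $\Gamma$ somewhere in $B_\rho(x_0)$, and the measure-theoretic density estimates available for quasi-minimizers (which reduce to the classical ones of \cite{T} once $\kappa\rho^\alpha$ is controlled, and this is guaranteed by $r\leq r_0$) force $\Gamma\leq P_y^a+Ma$ on all but a $\delta(n)$-fraction of $B_\rho(x_0)$. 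With both inclusions $\Gamma,\, -\Gamma\in\mathcal P\cap\mathcal W$ in hand, Proposition~\ref{PP}/Theorem~\ref{main} in their set-valued form apply simultaneously to the upper and lower envelopes of $\Gamma$, giving the improvement of flatness; a Campanato-type dyadic iteration completes the proof.

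\smallskip
\noindent\emph{Main obstacle.} The delicate step is the verification of $\mathcal W$: while for genuine minimizers every admissible competitor has no greater perimeter, here each modification pays a multiplicative $(1+\kappa\rho^\alpha)$ cost, so the perimeter gain extracted from the touching paraboloid must strictly beat this error. This is exactly what pins down the working scale $r\leq r_0$ (independent of $\eps$ but dependent on $\kappa,\alpha$) and, at the conceptual level, it is precisely the reason why the two-scale framework of Theorem~\ref{main} — rather than a single-scale Harnack inequality — is needed to treat quasi-minimizers.
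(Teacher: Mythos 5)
Your overall scaffolding (put $\p E$ in the set-valued framework of Remark \ref{r3}, verify a $\mathcal P$-class via quasi-minimality, get an oscillation decay, iterate) is aligned with the paper, and your $\mathcal P$-verification is in the spirit of Lemma \ref{classP} (though the saving is $c(n)\gamma|A|$ with $|A|$ bounded below by the density estimate, not $c(n)a\Lambda r^n$, and what one actually gets is membership in $\mathcal P^{[a^2,c_1]}_{4n}(a)$ in the vertically normalized picture, with the lower bound on the opening forced by the quasi-minimality error --- not a fixed interval $[\eta,1]$ at every scale $r\le r_0$). The genuine gap is your verification of the class $\mathcal W$. Sliding $P^a_y$ upward and invoking the $\mathcal P$-property does not keep a contact point inside $B_\rho(x_0)$ (nothing prevents the touching point from escaping the ball, or the paraboloid from detaching locally), and even if it did, one contact point per level is a pointwise statement: it gives no control on the \emph{measure} of the set where $\Gamma$ exceeds $P^a_y+Ma$. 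The density estimates for quasi-minimizers bound the volume fractions of $E$ and $E^c$ near boundary points; they do not exclude that $\p E$ rises above $P^a_y+Ma$ over a large fraction of $B_\rho(x_0)$. Ruling that out is essentially the Harnack estimate you are trying to prove, so the argument is circular. The paper never verifies $\mathcal W$ for quasi-minimizers; instead it only uses Proposition \ref{PP} (the measure estimate on contact cubes coming from the $\mathcal P$-class alone) and replaces the $\mathcal W$-step by a variational excess argument: if the surface fails to decay from both above and below, then most cubes $Q'_l$ meet $\p E_r$ both near the top and near the bottom of the slab (\eqref{upsidedown}); by compactness and the density estimates each such cube carries perimeter at least $(1+c_2)\mathcal H^{n-1}(Q'_l)$ over its projection (\eqref{56}), and summing gives $P\ge(1+c_2/4)\mathcal H^{n-1}(B'_{1/2})$, which contradicts the quasi-minimality comparison with the flat competitor. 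This perimeter-versus-projection step is the substitute for $\mathcal W$ and is missing from your proposal.

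A second, smaller but real gap is the passage from oscillation decay to $C^{1,\gamma}$. Theorem \ref{main} (or Lemma \ref{HI1}) gives a fixed-factor decay of the slab height with \emph{no tilting}; iterating it only yields H\"older continuity of $\p E$, and moreover the decay is only available while the flatness stays above the quasi-minimality threshold ($a\ge r^{\eps_0}$ in \eqref{50.1}). The paper gets the tilt through the compactness argument of Lemma \ref{imp}: the partial Harnack inequality gives uniform H\"older estimates for the vertically rescaled boundaries, the limit graph is shown to be harmonic by the touching argument of Remark \ref{r5} (again using quasi-minimality, with the constraint $\eps_0<\alpha/2$), and the improvement of flatness in a rotated frame is inherited from the harmonic limit. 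Your ``modulo a tilting of the reference hyperplane'' and ``Campanato-type dyadic iteration'' presuppose exactly this harmonic-approximation step, which your proposal does not supply.
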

The proof of Theorem \ref{quasi} is based on an improvement of flatness lemma for $\p E$ as in \cite{S}.
Without loss of generality, after a dilation, we can assume $\kappa=1.$ Constants depending only on $n, \alpha$ will be called universal. 

We remark that if $E$ is a quasi-minimizer and $0 \in \p E$ then  
$$P(E,B_r) \le 2 P(F,B_r), \quad \quad \quad \forall r \le 1,$$
and by taking $F=E \setminus B_r$ it can be shown by standard arguments that (see \cite{G}) 
\begin{equation}\label{50}
P(E,B_r) \le C(n) r^{n-1}, \quad \quad |E \cap B_r| \ge c(n) r^n.
\end{equation}

  From now on $E$ will be a quasi-minimizer to the perimeter functional in the unit cylinder $\mathcal C$ with $\kappa=1$. 
Also, balls in $\R^{n-1}$ will be denoted with $B'$. Finally, given $r>0$ (small) we denote by $E_r$ the $1/r$ dilation of $E$
 $$E_r := \frac 1 r E.$$
 
 The desired improvement of flatness lemma is stated below.
 
 \begin{lem}\label{imp} There exist $r_0, \eps_0>0$ small universal such that if 
\begin{equation}\label{inclusion1}
\p E_r \cap B_{1} \subset \{|x_n| \leq r^{\eps_0}\} , \quad 0 \in \p  E_r, \quad r \leq r_0
\end{equation}
then in some new system of coordinates $(y', y_{n})$ 
\begin{equation}\label{inclusion2}
\p E_r \cap B_{\eta} \subset \{|y_n| \leq \eta^{1+\eps_0}r^{\eps_0}\} 
\end{equation}
for some $\eta>0$ small universal.
 \end{lem}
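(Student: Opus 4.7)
The plan is to adapt the non-variational approach of \cite{S} for perimeter minimizers, using Theorem \ref{main} in the set-theoretic form of Remark \ref{r3} in place of the classical Krylov-Safonov Harnack inequality. I view the boundary $\Gamma := \p E_r \cap \mathcal C$ as a closed subset of $\R^{n+1}$ and aim to verify, for a threshold scale $\rho_* = r^\beta$ with $\beta > 0$ small depending on $\alpha$, that $\Gamma$ belongs to $\mathcal P^{[1,C_0]}_\Lambda(\rho_*) \cap \mathcal W^a_M(\bar C \rho_*)$ for appropriate universal $\Lambda, C_0, M$. Theorem \ref{main} will then produce a universal oscillation decay for $\Gamma$ at every scale between $1$ and $\rho_*$, and iterating this decay will reduce the flatness improvement \eqref{inclusion2} to the analogous regularity for a harmonic limit via a compactness argument.

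For the $\mathcal P$ membership I would use a competitor construction. If a paraboloid of opening $-a$ with $a \ge 1$, augmented by the $\Lambda$-bump $(a\Lambda/2)((x-x_0)\cdot\xi)^2 \chi_{B_{\rho_*}(x_0)}$, touches $\Gamma$ from below at some $x_0 \in A_a(\Gamma)$ in $B_1$, I would push $\p E_r$ away from the contact region by a normal displacement of the order $a\Lambda \rho_*^2$ permitted by the bump. The resulting perimeter gain is of the order $a^2 \Lambda^2 \rho_*^{n+1}$, whereas the quasi-minimality inequality for $E_r$ at scale $\rho_*$ (which has rescaled constant $r^\alpha$) tolerates a loss of only $r^\alpha \rho_*^{n-1+\alpha}$. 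Choosing $\beta < \alpha/(2-\alpha)$ so that $\rho_*^{2-\alpha} \gg r^\alpha$ and $\Lambda$ universal large forces the contradiction. For the $\mathcal W$ membership, the uniform density estimates \eqref{50} applied to $E_r$ furnish $|E_r \cap B_\rho(z)|,\,|E_r^c \cap B_\rho(z)| \ge c\rho^n$ for every $z \in \Gamma$ and every $\rho \le 1/2$; at a contact point $x_0 \in A_a(\Gamma)$ this density bound translates, for $M$ universal large, into the required measure estimate on the strip $\{u \le P_y^a + Ma\}$ enclosing $\Gamma$.

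With both inclusions in place, Theorem \ref{main} via Remark \ref{r3} gives a universal contraction $osc_{B_{1/2}} \Gamma \le (1-\eta_0)\, osc_{B_1} \Gamma$. Iterating this at all dyadic scales down to $\rho_*$ produces a uniform $C^\gamma$ modulus of continuity for the upper and lower envelopes of $\Gamma$ in $B'_{1/2}$. Sending a sequence $r_k \to 0$ of rescaled quasi-minimizers satisfying \eqref{inclusion1} and renormalizing the flatness by $r_k^{\eps_0}$, compactness supplies a uniform limit $w \colon B'_{1/2} \to \R$. The paraboloid comparison of the second paragraph passes to the limit precisely because $\rho_*(r_k) \to 0$ while the renormalized oscillation of $\Gamma_{r_k}$ stays bounded; combined with the symmetric statement for the complementary set, this identifies $w$ as a viscosity solution of $\Delta w = 0$. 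Interior $C^2$ estimates for harmonic functions then yield a tangent plane $\{y_n = \nabla w(0)\cdot y'\}$ with $|w(y') - \nabla w(0) \cdot y'| \le C\eta^2$ on $B'_\eta$, and rotating coordinates to this plane yields \eqref{inclusion2}, provided $\eta$ is universal small and $\eps_0$ is chosen so that $C\eta^2 < \tfrac12 \eta^{1+\eps_0}$.

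The main obstacle lies in the scale balancing: one must choose $\beta$ and $\eps_0$ so that both the $\mathcal P$ and $\mathcal W$ conditions hold for $E_{r_k}$ with parameters independent of $k$, while simultaneously $\rho_*(r_k)/r_k^{\eps_0} \to 0$, so that the paraboloid comparison at arbitrarily small scales passes to the limit and genuinely forces $w$ to be harmonic rather than merely a viscosity subsolution of something weaker. Threading this balance forces $\beta$ and $\eps_0$ to be small positive universal constants depending only on $n$ and $\alpha$.
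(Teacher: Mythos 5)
Your plan hinges on verifying that $\Gamma=\p E_r$ belongs to a class $\mathcal W^a_M(\bar C\rho_*)$ and then invoking Theorem \ref{main} ``via Remark \ref{r3}'', and this is where the argument breaks. First, Remark \ref{r3} only extends Proposition \ref{PP} (the contact--set measure estimate, which uses the class $\mathcal P$ alone) to closed sets; Theorems \ref{main} and \ref{HI} are not asserted for sets, because both the hypothesis $\mathcal W$ and the conclusion use the sublevel--set structure of a function, which $\p E_r$ (a priori not a graph) does not have. Second, and more seriously, the density estimates \eqref{50} cannot produce the $\mathcal W$ membership: Definition \ref{w2} demands that $\{u\le P^a_y+Ma\}$ fill a $(1-\delta(n))$--fraction of $B_\rho(x_0)$, where $\delta(n)$ is the \emph{specific small} constant fixed in the proof of Theorem \ref{HI} (it must beat the constants in the covering step), whereas \eqref{50} only gives that $E_r$ and $E_r^c$ each occupy a fixed fraction $c(n)$ of balls centered on the boundary --- a bound of the wrong type, nowhere near a fraction close to $1$. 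A quasi-minimal boundary does not obviously satisfy any pointwise-to-measure weak Harnack at small scales, and the paper never claims it does. Instead, the paper replaces the $\mathcal W$ ingredient by a perimeter--excess argument: Corollary \ref{pE} puts $\p E_r+ae_n$ in a $\mathcal P$ class at scale $a$ (the flatness height, not $r^\beta$), Proposition \ref{PP} then shows that most cubes $Q'_l$ of size $\bar C a$ meet $\p E_r$ in the lower strip $[-a,-a/2]$; if the surface also fails to detach from the upper strip, most cubes meet both strips, and by the projection inequality \eqref{55}--\eqref{56} each such cube contributes an excess $(1+c_2)\mathcal H^{n-1}(Q'_l)$, which summed over $B'_{1/2}$ contradicts quasi-minimality at unit scale. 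This yields the dichotomy of Lemma \ref{HI1} (the surface leaves either the bottom or the top $\eta_0 a$ layer), which is the correct substitute for the Harnack step and is weaker than the two-sided oscillation decay you want to extract from Theorem \ref{main}.

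The compactness step also needs more than you provide. After the vertical renormalization by $r_k^{-\eps_0}$, harmonicity of the limit $w$ is not obtained by ``passing the $\mathcal P$ comparison at scale $\rho_*$ to the limit'': touching $w$ by a quadratic with $\Delta Q>\delta$ corresponds, back in the original variables, to touching $\p E^k$ by a graph of opening $\gamma_k= r_k^{\eps_0-1}$ at scale $\rho_k=\delta r_k$, and one must rerun the competitor estimate of Lemma \ref{classP} (in the form of Remark \ref{r5}), now with the improved measure bound $|A_k|\ge c(\delta)\rho_k^{n-1}\gamma_k\rho_k^2$ available only because of the uniform convergence to a continuous graph. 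Balancing $c(\delta)\gamma_k^2\rho_k^{n+1}$ against the quasi-minimality allowance $C\rho_k^{\alpha+n-1}$ is exactly what forces the universal constraint $\eps_0<\alpha/2$; this quantitative closure is the ``scale balancing'' you flag as the main obstacle but do not carry out, and your single threshold $\rho_*=r^\beta$ does not interact correctly with the renormalization $r^{-\eps_0}$ needed for the limit.
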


The proof of this lemma relies on a compactness argument, once the appropriate Harnack-type inequality is obtained. 
 The Harnack inequality reads as follows.

\begin{lem}\label{HI1} There exist $\eps_0, c_0>0$, universal such that if for $r>0$ small
\begin{equation}\label{50.1}
\p E_r \cap (B'_1 \times [-1,1]) \subset \{|x_n| \leq a\}, \quad a \in [r^{\eps_0}, c_0]
\end{equation}
then 
\begin{equation}\label{50.2}
\p E_r \cap (B'_{1/2} \times [-1,1]) \subset \{ a' \le x_n \le a''\},
\end{equation}
with either $a'=(\eta_0-1)a$ or $a''=(1-\eta_0)a$, for some $\eta_0>0$ small universal.
\end{lem}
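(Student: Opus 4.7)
The strategy is to apply the set-valued version of Theorem \ref{HI} stated in Remark \ref{r3} (in dimension $n-1$) to the closed set $\Gamma := \p E_r \cap (\overline B'_1 \times [-a,a])$, viewed as a multi-valued graph over $B'_1$. After rescaling vertically by $1/a$, the set $\Gamma$ has amplitude $1$ and the small scale becomes $r/a \le r^{1-\eps_0}$, which is as small as we like if $\eps_0<1$ and $r \le r_0$. I will check that the rescaled $\Gamma$ belongs to $\mathcal P^{[1,C_0]}_\Lambda(r/a)\cap \mathcal W^{C_0}_M(\bar Cr/a)$ in the sense of Remark \ref{r3}, for universal $\Lambda, M, C_0$. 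Theorem \ref{HI} will then give a measure-theoretic weak Harnack statement for the upper and lower envelopes of $\Gamma/a$, which translates (by the standard iteration used to deduce Theorem \ref{main} from Theorem \ref{HI}) into \eqref{50.2}.

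The main step is verifying the comparison class $\mathcal P^{[1,C_0]}_\Lambda(r/a)$. Suppose a rescaled paraboloid $A\,P_\Lambda(x'-x_0)$, $A \in [1,C_0]$, touches $\Gamma/a$ from below at some $x_0$ on a neighborhood $B'_{r/a}(x_0)$. Lifting back, the smooth hypersurface $\Sigma := \{x_n = a\,A\,P_\Lambda(x'-x_0)\}$ touches $\Gamma$ from below on $B_r(x_0)$, and for $\Lambda$ large universal its mean curvature in the upward normal direction is bounded below by $c\,a\,A\,\Lambda$. Using the competitor $F := E_r \cup \{x_n < \Sigma(x')\}$, which agrees with $E_r$ outside $B_r(x_0)$, a standard first-variation computation (see \cite{G}) gives
\[ P(E_r,B_r(x_0)) - P(F,B_r(x_0)) \;\ge\; c\,a\,A\,\Lambda \cdot |(E_r \setminus F) \cap B_r(x_0)| \;\ge\; c'\,a\,A\,\Lambda\, r^n, \]
where the lower bound on the symmetric difference comes from the quadratic correction in $P_\Lambda$. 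On the other hand, quasi-minimality and \eqref{50} give $P(E_r,B_r)-P(F,B_r) \le r^\alpha P(F,B_r)\le C r^{n-1+\alpha}$. These two bounds are incompatible once $r^{1-\alpha} \le c\,a\,A\,\Lambda$, which holds for $r \le r_0$ since $a \ge r^{\eps_0}$ and we may choose $\eps_0 < 1 - \alpha$. The class $\mathcal W^{C_0}_M(\bar C r/a)$ is easier: at a contact point with touching paraboloid $P^{Aa}_y$, if $\Gamma$ rose above $P^{Aa}_y + MAa$ on a base set of fraction $\delta$ inside $B'_{\bar C r}(x_0)$, the $\mathcal H^{n-1}$-area of $\Gamma$ would exceed $\delta\,|B'_{\bar C r}|\cdot M A a /(\bar C r) \gg (\bar C r)^{n-1}$, contradicting the perimeter bound in \eqref{50} once $M$ is chosen large universal.

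Once both classes are verified, Theorem \ref{HI} applied separately to the lower envelope $\tilde u(x')=a+\inf\{x_n:(x',x_n)\in\Gamma\}$ and the upper envelope $2a-\tilde v(x')$ of $\Gamma/a$ --- each of which attains a value $\le \tfrac{1}{2}$ somewhere in $B'_{1/2}$ thanks to $0 \in \p E_r$ and the flatness \eqref{50.1} --- gives the weak Harnack estimate on at least $3/4$ of $B'_{1/2}$, and the oscillation decay $osc_{B'_{1/2}}\tilde u \le 2(1-\eta_0)a$ for some $\eta_0$ universal, which is exactly \eqref{50.2}. The principal obstacle is Step 1: constructing the competitor $F$ and accounting for its perimeter when $\Gamma$ is only a closed set (not a graph), and, above all, calibrating $\eps_0$ against the quasi-minimality exponent $\alpha$ so that the universal mean-curvature gain $\sim a A \Lambda$ dominates the quasi-minimality error $\sim r^\alpha$; this is precisely where the constraint $a \ge r^{\eps_0}$ from the hypothesis enters.
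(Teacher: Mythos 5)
There is a genuine gap, and it sits exactly where you locate the ``main step'': neither of your class verifications survives scrutiny, and the second one cannot be repaired in the form you propose. For the $\mathcal P$ class, your competitor $F=E_r\cup\{x_n<\Sigma(x')\}$ is built from the touching surface itself, so $|F\triangle E_r|$ has no lower bound whatsoever: $\p E_r$ may hug $\Sigma$ from above arbitrarily closely near the contact point, and the ``quadratic correction in $P_\Lambda$'' gives nothing, since the contact is only one point. The paper's Lemma \ref{classP} fixes this by \emph{raising} the comparison surface by $\sim\gamma\rho^2$ before cutting and then invoking the density estimate \eqref{50} in a ball of radius $\gamma\rho^2/8$; this yields only $|A|\gtrsim(\gamma\rho^2)^n$, and consequently touching is excluded only for openings $\gamma\ge\rho^{-1+\mu}$. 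That is why Corollary \ref{pE} places $\p E_r+ae_n$ in $\mathcal P^{[a^2,c_1]}_{4n}(a)$ --- openings bounded below by $a^2$ and comparison scale $a$ --- and not in $\mathcal P^{[1,C_0]}_\Lambda(r/a)$ as you claim; your calibration $\eps_0<1-\alpha$ does not reflect the actual constraint. For the $\mathcal W$ class your area bound is simply false: if $\Gamma$ sits above $P^{Aa}_y+MAa$ over a $\delta$-fraction of the base, it can do so as an essentially horizontal sheet of area $\approx\delta|B'_{\bar Cr}|$ joined to the contact point by a thin cliff of area $\approx MAa\cdot\mathcal H^{n-2}(\text{cliff base})$, which can be far smaller than $\delta|B'_{\bar Cr}|\,MAa/(\bar Cr)$; no contradiction with \eqref{50} follows. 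Moreover, Remark \ref{r3} only extends Proposition \ref{PP} (the contact-set covering) to sets, not Theorem \ref{HI}, so even with both classes verified your final step of applying Theorem \ref{HI} to the upper and lower envelopes of a multivalued $\Gamma$ is not available.

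The paper's proof is structured precisely to avoid a $\mathcal W$-type verification, which is not known (and not claimed) for quasi-minimizers at small scales. It uses only the $\mathcal P$-class information through Proposition \ref{PP}: if $\p E_r$ fails to detach from the bottom, the contact cubes $A^-_l$ of size $\bar Ca$ (not $\bar Cr$) cover $3/4$ of $B'_{1/2}$, and if it also fails to detach from the top, the upside-down argument gives \eqref{upsidedown}, so in at least half of the cubes the surface meets both slabs $Q'_l\times[-a,-a/2]$ and $Q'_l\times[a/2,a]$. The weak-Harnack ingredient is then replaced by a variational one: in such cubes the projection inequality \eqref{55} improves to the perimeter excess \eqref{56} (by compactness plus the density estimates \eqref{50}), the excesses sum to \eqref{sumup}, and this contradicts quasi-minimality tested against the flat cut-off competitor once $a\le c_0$. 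This two-sided interaction (top and bottom contact sets in the same cubes producing excess perimeter) is the essential mechanism of Lemma \ref{HI1}, and it is not recoverable by running Theorem \ref{HI} separately on the two envelopes; that is the missing idea in your proposal.
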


Before we proceed with the proof of Lemma \ref{HI1}, we show first that $\p E$ belongs to an appropriate $\mathcal P_\Lambda^I$ class.

\begin{lem}\label{classP} Let $E$ be a quasi-minimizer and $\rho>0$ small. Then $\p E$ cannot be touched by below at $0$ in the cylinder $B'_\rho \times [-\rho, \rho]$ with the graph
$$x_n= \gamma \, \, Q(x'), \quad \quad \quad Q(x'):=2n x_1^2- \frac 12 |x'|^2 + b' \cdot x',$$
if $$\gamma \in [\rho^{-1+\mu},  c_1\rho^{-1}], \quad \quad \gamma |b'| \le c_1,$$ 
and $c_1, \mu>0$ universal.
\end{lem}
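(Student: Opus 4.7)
I would argue by contradiction. Suppose the graph $\Sigma := \{x_n = u(x')\}$ with $u = \gamma Q$ touches $\partial E$ from below at $0$ in the cylinder $\mathcal C_\rho := B'_\rho \times [-\rho,\rho]$; by the convention for touching of sets (cf.\ Remark \ref{r3}), this means that the subgraph $\{x_n < u(x')\} \cap \mathcal C_\rho$ is contained in $E$ while $0 \in \partial E \cap \Sigma$. The strategy is to construct an explicit competitor $F$ of $E$ inside $B_\rho$ with $P(E,B_\rho) - P(F,B_\rho)$ exceeding the quasi-minimality tolerance $\rho^\alpha P(F,B_\rho)$.

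The crucial input is that $\Sigma$ has large positive mean curvature. Define the upward unit normal field
$$X(x',x_n) := \frac{(-\nabla u(x'),\,1)}{\sqrt{1+|\nabla u(x')|^2}}.$$
Since $\Delta_{x'} Q = (4n-1)+(n-2)(-1) = 3n+1$, and the hypotheses $\gamma\rho\le c_1$, $\gamma|b'|\le c_1$ give $|\nabla u|\le Cc_1$ on $B'_{\rho/2}$, a direct computation yields
$$\text{div}\, X \,\le\, -c_n\gamma \quad \text{on } B'_{\rho/2}\times\R,$$
provided $c_1$ is small enough that the Laplacian contribution $\gamma(3n+1)/\sqrt{1+|\nabla u|^2}$ dominates the quadratic cross-terms of size $c_1^2\gamma$.

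As competitor I would take $F := E \cup V$, where $V := B_\sigma(0)\cap\{x_n \le u(x')+t\}$ for parameters $\sigma\in(0,\rho/2)$ and $t>0$ to be chosen. Applying the divergence theorem to $W := V\setminus E$ with the field $X$, using $X = \nu_V$ on the graph portion of $\partial V$, $|X|\le 1$ on the spherical portion, and the calibration bound $\int_{\partial^* E \cap V^\circ} X\cdot\nu_E \le \mathcal H^{n-1}(\partial^* E\cap V^\circ)$ together with the identity
$$P(E,B_\rho) - P(F,B_\rho) \,=\, \mathcal H^{n-1}(\partial^* E\cap V^\circ) - \mathcal H^{n-1}(\partial^* V\cap E^c),$$
one obtains
$$P(E,B_\rho) - P(F,B_\rho) \,\ge\, c_n\gamma\,|W| - C_n\,\mathcal H^{n-1}(\partial B_\sigma \cap V \cap E^c).$$
The volume $|W|$ is controlled from below by $c\sigma^n$ via the density estimate $|E^c \cap B_\sigma| \ge c\sigma^n$ (valid at $0\in\partial E$ since $E^c$ is also a quasi-minimizer with the same constants) combined with the touching hypothesis $E^c\cap\mathcal C_\rho\subset\{x_n\ge u\}$ and a choice of $t\asymp \sigma$ so that $V$ captures this mass.

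The main technical obstacle is the scale balance between the bulk gain $c_n\gamma\sigma^n$ and the spherical error term $\sigma^{n-1}$, which match only at $\sigma\asymp 1/\gamma$. Since $\gamma\le c_1\rho^{-1}$ forces $1/\gamma \ge \rho/c_1 > \rho$, this natural scale lies outside $B_\rho$, so a crude ball competitor does not close the inequality. The remedy uses the complementary lower bound $\gamma\ge \rho^{-1+\mu}$: one replaces the ball by a thin pancake-shaped region aligned with the normal field $X$, whose lateral boundary has $(n-1)$-dimensional measure of order $\sigma^{n-2}t$ rather than $\sigma^{n-1}$. Then the gain $\gamma\sigma^{n-1}t$ dominates both the lateral error (requiring $\gamma\sigma\ge C$, achievable at $\sigma\asymp \rho^{1-\mu}$) and the quasi-minimality tolerance $C\rho^{n-1+\alpha}$, provided $\mu$ is chosen small in terms of $\alpha$. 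Executing this balance precisely is the heart of the proof and dictates the quantitative range $\gamma\in[\rho^{-1+\mu},c_1\rho^{-1}]$ appearing in the statement.
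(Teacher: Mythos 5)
Your setup (the calibration field $X$ with $\operatorname{div}X\le -c_n\gamma$, a competitor obtained by modifying $E$ across a raised copy of the touching graph, the divergence theorem over the modified region, the density estimate at $0\in\p E$, and the comparison with the quasi-minimality tolerance $C\rho^{n-1+\alpha}$) is exactly the right skeleton and matches the paper's argument. But the step you yourself flag as "the heart of the proof" --- controlling the lateral boundary of the region where the competitor differs from $E$ --- is not actually closed by your remedy, and as stated it fails. Inside the ball $B_\rho$ every admissible scale $\sigma\le\rho$ satisfies $\gamma\sigma\le c_1$, so the divergence gain $\gamma\sigma^{n-1}t$ of any pancake can never dominate its lateral area $\sim\sigma^{n-2}t$; and the escape to $\sigma\asymp\rho^{1-\mu}$ is not available, since $\rho^{1-\mu}>\rho$ and the touching hypothesis (hence the location of $\p E$, which you need both to build $V$, to know $W\subset\{x_n\le u+t\}$, and to bound $|W|$ from below) is only given in the cylinder $B'_\rho\times[-\rho,\rho]$; even formally, $\gamma\sigma\ge\rho^{-1+\mu}\rho^{1-\mu}=1$ only, not a large constant, and applying quasi-minimality at radius $\sigma>\rho$ enlarges the tolerance to $C\sigma^{n-1+\alpha}$, making the balance worse.

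The paper's resolution is different and removes the lateral term altogether rather than trying to beat it: one replaces the raised graph $u+t$ by the \emph{bent} surface
$$\Gamma=\{x_n=\psi(x')\},\qquad \psi:=\gamma\Bigl(Q+\tfrac14\rho^2-\tfrac12|x'|^2\Bigr),$$
which lies above $\gamma Q$ by $\sim\gamma\rho^2$ near $0$ but dips below $\gamma Q$ for $|x'|\ge\rho/\sqrt2$. Hence the region $A$ between $\p E$ and $\Gamma$ is compactly contained in $B_\rho$, the competitor $F=E\cap\{x_n>\psi\}$ involves no spherical or cylindrical cut, and the signed-distance (calibration) computation gives cleanly $P(E,B_\rho)-P(F,B_\rho)\ge c(n)\gamma|A|$ with no error term. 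The point you miss is that this gain only has to beat the quasi-minimality tolerance $C\rho^{n-1+\alpha}$, not any $\sigma^{n-1}$-type area: the density estimate at the contact point gives $|A|\ge|E\cap B_{\gamma\rho^2/8}|\ge c(\gamma\rho^2)^n$ (a tiny volume, at scale $\gamma\rho^2$, not $c\sigma^n$ at a large scale), and then $c\gamma(\gamma\rho^2)^n\le C\rho^{n-1+\alpha}$ forces $\gamma\le C\rho^{-1+\alpha/(n+1)}$, contradicting $\gamma\ge\rho^{-1+\mu}$ once $\mu$ is small compared to $\alpha$. So the quantitative range in the statement is exploited only at this last step, not to fix a lateral-boundary balance. (The difference between adding mass to $E$ versus cutting $E$ is just the orientation convention for "touching from below" and is immaterial.)
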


\begin{proof} 
Assume that $E$ lies above the graph of $ \gamma \, Q$. Consider the graph
$$\Gamma:=\{ \quad x_n= \psi(x')\}, \quad \quad \psi(x'):=\gamma \left(Q + \frac 14 \rho^2- \frac 12 |x'|^2\right),$$
with $\gamma \le c_1 \rho^{-1}$, and define the competitor $F$ 
 $$F:=E \cap \{x_n >\psi(x'\}.$$

 We claim that
 \begin{equation}\label{2}
 P(E, B_\rho) - P(F, B_\rho) \geq c(n) \gamma \,  |A|,
 \end{equation} for a universal constant $c(n)$, with
  $$A := E \cap \{x_n < \psi(x')\} \subset \subset B_{\rho}.$$

\begin{figure}[h]
\includegraphics[width=0.8 \textwidth]{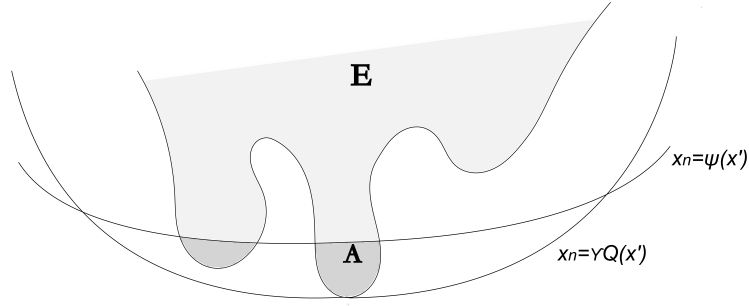}
\caption{The region $A$}
    \label{fig2}
\end{figure}

To derive \eqref{2} let us denote by $d_\Gamma$ the signed distance function from $\Gamma$, which is positive in $F$ and compute
 $$\int_A \Delta d_\Gamma \; dx = \int_{\p A} (d_\Gamma)_\nu \;d\mathcal H^{n-1} \geq \mathcal H^{n-1}(\Gamma\cap E) - P(E, \{x_n < \psi\}) =$$ $$= P(F, B_{\rho})- P(E, B_{ \rho}).$$
 Since,
 $$\Delta d_\Gamma(x)= H_{\Gamma_p}(x), \quad x\in A$$
 where $\Gamma_p$ is the parallel surface to $\Gamma$ passing through $x$, it suffices to show that
 $$ H_{\Gamma_p} \geq c(n) \, \gamma \quad \text{in $A$}.$$ 
This follows from standard computations (see \cite{GT}) by using that 

a) $|\nabla \psi|\le C(n) \gamma \rho + c_1 \le c_1 C(n)$, 

b) the principal curvatures of $\Gamma$ are bounded by $C \gamma$, 

c) $|d_\Gamma| \le C \rho^2 \ll (C \gamma)^{-1}$ in $A$ 

and
$$H_\Gamma(x) = div\frac{\nabla \psi}{ \sqrt{1+|\nabla \psi|^2}} \geq 2 \Delta \psi - C(n) (\max |D^2\psi|) |\nabla \psi|^2  \geq c \, \, \gamma,$$
provided that $c_1$ is chosen sufficiently small. Thus the desired bound \eqref{2} is proven.

 Now, since $E$ is a quasi-minimizer we get that 
 $$P(E, B_\rho)- P(F,B_{\rho}) \leq \rho^{\alpha}P(F,B_{\rho}) \leq  \rho^{\alpha}P(E, B_\rho) \leq C \rho^{\alpha+n-1}$$
 where in the last inequality we used \eqref{50} with $r= \rho$.
 Combining this estimate with \eqref{2} we get that
 \begin{equation}\label{3}
 c \,  \gamma \, |A| \leq C \rho^{\alpha+n-1}.
 \end{equation}
 By the second density estimate in \eqref{50} we have
\begin{equation}\label{density}|A| \geq |E \cap B_{ \gamma \rho^2/8}| \geq c_0 (\gamma \rho^2)^n.\end{equation}
Here we have used that by the Lipschitz continuity of $\Gamma$
$$B_{\gamma \rho^2/8} \subset \{x_n <\psi(x')\} \cap B_\rho.$$
Then \eqref{density} together with \eqref{3} gives 
 $$\gamma \leq C \rho^{-1 + \frac{\alpha}{n-1}} \le \rho^{-1+\mu}.$$

\end{proof}

\begin{rem}\label{r5} We remark that the proof above holds if we replace the quadratic part of $Q$ by $\frac 12 (x')^T M x'$ with $tr M>0$ provided that the constant $c_1(M)$ is chosen sufficiently small depending on $M$ as well. In particular the estimate \eqref{3} is valid with a constant $c=c(M)$ as long as $\gamma \rho \le c_1(M)$.
\end{rem}

We have the following corollary of Lemma \ref{classP}.

\begin{cor}\label{pE}
Assume that $\p E$ satisfies the hypothesis \eqref{50.1} of Lemma $\ref{HI1}$. Then in $B_1'$ we have
$$\p E_r + a e_n  \, \in \, \mathcal P^I_{4n}(a), \quad \quad I=[a^2, c_1 ].$$
\end{cor}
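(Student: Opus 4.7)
\medskip

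\noindent\textbf{Plan of proof.} I would argue by contradiction and reduce the question to Lemma \ref{classP}. Assume that for some $a' \in [a^2, c_1]$ the polynomial
$$\Phi(x') := P_y^{a'}(x') + 2na'\,((x'-x_0)\cdot \xi)^2 \, \chi_{B'_a(x_0)}(x')$$
from Definition \ref{p2} touches $\p E_r + a e_n$ from below at some $x_0 \in B'_1$. Since the bump vanishes at $x_0$, the touching height $\Phi(x_0) = P_y^{a'}(x_0)$ lies in $[0, 2a]$ by the flatness hypothesis \eqref{50.1}. Inside $B'_a(x_0)$, expanding $P_y^{a'}$ around $x_0$ writes $\Phi(x') - a$ as a quadratic whose non-constant part is
$$a' \left[\, 2n((x'-x_0)\cdot\xi)^2 - \tfrac{1}{2}|x'-x_0|^2 + (y-x_0)\cdot(x'-x_0)\, \right].$$

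I would then zoom out by the factor $r$ to pass from $\p E_r$ back to $\p E$. The contact point $(x_0, \Phi(x_0)-a) \in \p E_r$ maps to a contact point on $\p E$; translating it to the origin and, without loss of generality, rotating $\xi$ to $e_1$ within $\R^{n-1}$, one finds that $\p E$ is touched from below at $0$, inside the cylinder $B'_\rho \times [-\rho, \rho]$ with $\rho := 2ra$, by a graph of exactly the form excluded in Lemma \ref{classP} (with the more general quadratic form allowed by Remark \ref{r5}), whose parameters are
$$\tilde\gamma = \frac{a'}{r}, \qquad \tilde b' = r(y - x_0).$$

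It then remains to check that $(\tilde\gamma, \tilde b', \rho)$ lies in the range where Lemma \ref{classP} forbids a touching. The upper bound $\tilde\gamma \le c_1/\rho$ and the bound $\tilde\gamma |\tilde b'| = a' |y - x_0| \le c_1$ follow at once from $a' \le c_1$, $|y-x_0| \le 7/4$ (since $y \in B'_{3/4}$, $x_0 \in B'_1$), and $a \le 1$, provided the constant $c_1$ in the corollary is taken to be a fixed universal fraction of the $c_1$ of Lemma \ref{classP}. The delicate point, and the main obstacle, is the lower endpoint $\tilde\gamma \ge \rho^{\mu-1}$: it reduces to $a' \gtrsim r^\mu a^{\mu-1}$, and combined with $a' \ge a^2$ this becomes $a^{3-\mu} \gtrsim r^\mu$. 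This is guaranteed by the flatness hypothesis $a \ge r^{\epsilon_0}$ of \eqref{50.1} exactly when $\epsilon_0$ is chosen small enough relative to the gain $\mu$ coming from Lemma \ref{classP}, namely $\epsilon_0 \le \mu/(3-\mu)$. With this choice of $\epsilon_0$, the hypotheses of Lemma \ref{classP} are met, yielding the desired contradiction.

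\medskip
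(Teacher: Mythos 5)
Your argument is correct and follows the paper's own proof essentially verbatim: rescale the touching configuration from $\p E_r$ back to $\p E$, read off $\tilde\gamma=a'/r$ and the (rescaled) linear term, and contradict Lemma \ref{classP} (after a rotation; Remark \ref{r5} is not even needed) by checking $\tilde\gamma\in[\rho^{\mu-1},c_1\rho^{-1}]$ and $\tilde\gamma|\tilde b'|\le c_1$, which is exactly where $a\ge r^{\eps_0}$, $a'\ge a^2$ and the smallness of $\eps_0$ relative to $\mu$ enter, as you computed. The only (harmless) slip is the choice $\rho=2ra$: the touching including the bump term is only known for $|x'-x_0|\le a$, i.e. on scale $\rho=ra$ for $\p E$ (the paper's choice), but all your parameter checks are insensitive to this factor of $2$.
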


\begin{proof}
If $x_n= \sigma P_{4n}(x')$ with $\sigma \in I$ touches from below $\p E_r$ at some point, say on $x'=0$, in the cylinder $|x'| \le a$, then $$x_n= \sigma \,  r \, P_{4n}(x' r^{-1}) =: \sigma \,  r^{-1} Q(x'),$$ touches $\p E$ from below in the cylinder $|x'| \le ar=:\rho$, with $Q$ a polynomial as in Lemma \ref{classP}. We contradict the conclusion of the lemma since it is straightforward to verify that
$$\sigma r^{-1} \in [\rho^{\mu-1},c_1 \rho^{-1}] \quad \mbox{and} \quad \sigma \le c_1,$$
provided that $\eps_0$ and $c_0$ are sufficiently small. 

\end{proof}

{\it Proof of Lemma $\ref{HI1}$}.
If $\p E_r$ does not lie above $x_n=(\eta_0-1)a$ in $B_{1/2}'$ then, by Corollary \ref{pE}, we may apply Proposition \ref{PP} to $\p E_r + a e_n$.
 
If $[\eta_0 a, C \eta_0 a] \subset [a^2 , c_1]$ with $C$ universal, then
$$\mathcal H^{n-1} \left(A^{l}_{C \eta_0 a}(\p E_r) \cap B_{1/2}'\right) \ge \frac 3 4 \, \, \mathcal H^{n-1}(B_{1/2}'),$$
where $$A^{l}_{C \eta_0a}(\p E_r)$$ represents the collections of cubes $Q_l'$ of size $\bar C a$ (with $\bar C$ universal) in the $x'$ variable, in which the contact set $A_{C \eta_0a }(\p E_r)$ projects.

We choose $\eta_0$ such that $C \eta_0 = 1/2$, and then $a \le c_0$ small such that $a^2 \le \eta_0 a.$ Denote by 
$A^-_l(\p E_r)$ the union of the cubes $Q'_l$ of size $\bar C a$ (small) with the property that 
\begin{equation} \label{53}
Q'_l \times [-a, -\frac 1 2 a] \, \,   \cap \p E_r \neq \emptyset.
\end{equation}
Since $A^{l}_{C \eta_0a}(\p E_r) \subset A^-_l(\p E_r)$, the inequality above gives
\begin{equation}\label{variant}
\mathcal H^{n-1} \left(A^-_l(\p E_r) \cap B'_{1/2} \right) \geq \frac 3 4 \mathcal H^{n-1}(B'_{1/2}). 
\end{equation}

If $\p E_r$ does not lie below $x_n=(1-\eta_0)a$ in $B_{1/2}'$ then, after using the same argument ``upside-down", we find that 
\begin{equation}\label{upsidedown}
\mathcal H^{n-1} \left(A^+_l(\p E_r) \cap A^-_l(\p E_r) \cap B'_{1/2} \right) \geq \frac 12 \mathcal H^{n-1}(B'_{1/2}),
\end{equation}
where $A^+_l(\p E_r)$ denotes the union of cubes $Q'_l$ with the property that 
\begin{equation} \label{54}
Q'_l \times [\frac 1 2 a,  a] \, \, \cap \p E_r \neq \emptyset.
\end{equation}
On the other hand, by projecting $\p E_r$ along $e_n$, we find that for all cubes $Q'_l$,
\begin{equation}\label{55}
P(E_r, Q'_l \times [-a, a]) \geq  \mathcal H^{n-1}(Q'_l),
\end{equation}
with equality only if $\p E_r$ coincides with a hyperplane $x_n = const.$ in the cylinder $Q_l' \times [-a,a]$.  
This inequality can be improved as
\begin{equation} \label{56}
P(E_r, Q'_l \times [-a, a]) \geq (1+ c_2)\mathcal H^{n-1}(Q'_l), \quad \mbox{if} \quad Q'_l \in A^+_l(\p E) \cap A^-_l(\p E).\end{equation}
This follows by compactness and the fact that \eqref{53}, \eqref{54} 
together with the density estimate \eqref{50} imply that 
both $E_r$ and its complement $E^c_r$ have uniform density estimates 
in the two half cylinders $Q_l' \times [-a,0]$ and $Q_l' \times [0,a]$, hence $E_r$ cannot be close (say in $L^1$ sense) to a half space in the $e_n$ direction.

Therefore, using \eqref{55} combined with \eqref{56},\eqref{upsidedown} we obtain 
\begin{equation}\label{sumup}
P(E, B'_{1/2} \times  [-a, a]) \geq \left(1+ \frac {c_2}{4} \right) \mathcal H^{n-1}(B'_{1/2}).
\end{equation}
Now we compare $E$ and $F:=E \setminus (B'_{r/2} \times [-ra, \infty))$ in $B_r$, and by quasi-minimality,
$$P(E,  B'_{r/2} \times [-\frac r 2 \times \frac r 2]) \le (1+ r^{\alpha}) (1+ Ca)\mathcal H^{n-1}(B'_{r/2}) + C r^{\alpha + n-1}.$$
This gives
$$P(E_r, B'_{1/2} \times  [-a,a]) \leq (1+ C a + C r^{\alpha})\mathcal H^{n-1}(B'_{1/2}),$$
and we contradict \eqref{sumup} if $a$ is small enough.
\qed

\bigskip

Next we provide the proof of the improvement of flatness Lemma \ref{imp}.

\bigskip

{\it Proof of Lemma $\ref{imp}.$} We argue by compactness. Assume by contradiction that there exist a sequence $r_k \to 0$ and of quasi-minimizers $E^k$ such that $E^k_{r_k}$ satisfies the assumption of the lemma but not the conclusion.

Denote by $$E^k_*:= \left\{(x', r_k^{-\eps_0}x_n) : x \in \p E^k_{r_k} \right \} \, \, \subset B_1' \times [-1,1].$$ 
Then, in view of Lemma \ref{HI1} (up to extracting a subsequence), $E^k_*$ converges in the Hausdorff distance sense to the graph $E^*:=\{x_n=w(x')\}$ of a H\"older continuous function $w$. We wish to prove that $w$ is harmonic. This leads to a contradiction since the graph of a harmonic function satisfies the improvement of flatness with respect to linear functions.

To show that $w$ is harmonic, let us assume by contradiction that a quadratic polynomial $Q(x')$ with $$\Delta Q > \delta >0$$ 
touches $w$ strictly from below, say for simplicity at $0$, in a neighborhood $B'_\delta$. Then, by the uniform convergence of $E^k_*$ to $E_*$ we conclude that for $k$ large (rescaling back)
\begin{equation}\label{inclusionk}E^k \subset \{x_n > Q_k:= r_k^{1+\eps_0}(Q(r^{-1}x')-c_k)\}, \quad c_k \to 0 \quad \text{as $k \to \infty$}.\end{equation}

Notice that the polynomial $Q_k$ can be written as 
$$Q_k= \gamma_k (\tilde Q +o(1)), \quad \quad \gamma_k:= r_k^{\eps_0 -1},$$
and $\tilde Q$ satisfies the hypotheses of Remark \ref{r5} with $\rho_k:= \delta r_k$. We obtain  
\begin{equation}\label{density3}c(\delta) \gamma_k |A_k| \leq C \rho_k^{\alpha+n-1},\end{equation}
where $A_k$ is defined as in the proof of Lemma \ref{classP}.
On the other hand, in view of the H\"older continuity of $w$ and the uniform convergence of the $E^k_*$ to $E_*$, we have 
$$|A_k| \geq c(\delta) \rho_k^{n-1}(\gamma_k \rho_k^2).$$ Therefore, we reach a contradiction if 
$$c(\delta) \gamma_k^2 \rho_k^{n+1} \geq C \rho_k^{\alpha+n-1}.$$
This is the case for $k$ large, as long as 
$$\eps_0 < \frac \alpha 2.$$
\qed


\begin{thebibliography}{9999}
 \bibitem[A]{A} Almgren F. J. Jr., Existence and regularity almost everywhere of solutions to elliptic variational problems with constraints. Mem. Amer. Math. Soc. 4 (1976), no. 165, viii+199 pp. 
\bibitem[B]{B}Bakelman I.J., Convex Analysis and Nonlinear Geometric Elliptic Equations, Springer-Verlag, New York, 1994.
\bibitem[BLP]{BLP}Bensoussan, Alain; Lions, Jacques-Louis; Papanicolaou, George
Asymptotic analysis for periodic structures.
Studies in Mathematics and its Applications, 5. North-Holland Publishing Co., Amsterdam-New York, 1978. xxiv+700 pp. ISBN: 0-444-85172-0
\bibitem[Bo]{Bo}  Bombieri E.,  Regularity theory for almost minimal currents. Arch. Rational Mech. Anal. 78 (1982), no. 2, 99Ð130. 
\bibitem[CC]{CC} Caffarelli L., Cabr\'e X., 
Fully nonlinear elliptic equations. (English summary)
American Mathematical Society Colloquium Publications, 43. American Mathematical Society, Providence, RI, 1995. vi+104 pp. ISBN: 0-8218-0437-5

 \bibitem[CS]{CS} Caffarelli L.,  Silvestre L.,  Regularity theory for fully nonlinear integro-differential equations. Comm. Pure Appl. Math. 62 (2009), no. 5, 597--638.
 
\bibitem[GT]{GT} Gilbarg D., Trudinger N.S., Elliptic Partial Differential Equations of Second Order, 2nd edition, Springer-Verlag, New York, 1983. 
 \bibitem[G]{G}Giusti E.,  Minimal surfaces and functions of bounded variation. Monographs in Mathematics, 80. BirkhŠuser Verlag, Basel, 1984. xii+240 pp. ISBN: 0-8176-3153-4 
\bibitem[HT]{HT}Hung-Ju K., Trudinger N.S.,  Maximum principles for difference operators. Partial differential equations and applications, 209--219, Lecture Notes in Pure and Appl. Math., 177, Dekker, New York, 1996.
\bibitem[S]{S}  Savin O.,  Small perturbation solutions for elliptic equations. Comm. Partial Differential Equations 32 (2007), no. 4-6, 557--578. 
\bibitem[T]{T} Tamanini I.,  Boundaries of Caccioppoli sets with H\"older-continuous normal vector. J. Reine Angew. Math. 334 (1982), 27--39. 
\end{thebibliography}
\end{document}